\theoremstyle{plain}
\newtheorem{theorem}{Theorem}
\newtheorem{proposition}[theorem]{Proposition}
\newtheorem{lemma}[theorem]{Lemma}
\theoremstyle{definition}
\newtheorem{definition}[subsection]{Definition}
\newtheorem{remark}[subsection]{Remark}
\newtheorem{nothing*}[subsection]{}
\newcommand{\rien}[1]{}
\newcommand{\cO}{{\ensuremath{\mathcal{O}}}}
\newcommand{\cR}{{\ensuremath{\mathcal{R}}}}
\renewcommand{\epsilon}{\varepsilon}
\renewcommand{\phi}{\varphi}
\begin{document}
\renewcommand{\baselinestretch}{1.07}

\title[Punctured Limit Sets]
{Fatou Components with Punctured Limit Sets}

\author{Luka Boc-Thaler}
\author{John Erik Forn\ae ss}
\author{Han Peters}

\subjclass[2000]{32E20, 32E30, 32H02}
\date{April 15, 2013}
\keywords{}

\vfuzz=2pt

\vskip 1cm

\begin{abstract}
We study invariant Fatou components for holomorphic endomorphisms in $\mathbb{P}^2$. In the recurrent case these components were classified by Sibony and the second author in 1995. In 2008 Ueda completed this classification by proving that it is not possible for the limit set to be a punctured disk. Recently Lyubich and the third author classified non-recurrent invariant Fatou components, under the additional hypothesis that the limit set is unique. Again all possibilities in this classification were known to occur, except for the punctured disk. Here we show that the punctured disk can indeed occur as the limit set of a non-recurrent Fatou component. We provide many additional examples of holomorphic and polynomial endomorphisms of $\mathbb{C}^2$ with non-recurrent Fatou components on which the orbits converge to the regular part of arbitrary analytic sets.
\end{abstract}

\maketitle \vfuzz=2pt

\section{Introduction}

Let $F$ be a holomorphic endomorphism of $\mathbb{P}^2$. Recall that $z \in \mathbb{P}^2$ lies in the \emph{Fatou set} if there exists a neighborhood $U(z)$ on which the family of iterates $\{F^n\}$ is normal. A connected component of the Fatou set is called a \emph{Fatou component}.

Fatou components for rational functions acting on the Riemann sphere have been precisely described. Sullivan \cite{Sullivan1985} proved in 1985 that every Fatou component is (pre-) periodic. Invariant Fatou components were classified by Fatou, who showed that an invariant Fatou component is either the basin of an attracting fixed point, the basin of a parabolic fixed point, or a rotation domain conformally equivalent to either the unit disk or an annulus. The existence of rotation domains equivalent to a disk was later shown by Siegel, and the existence of rotation domains equivalent to an annulus was shown by Herman in 1979.

Fatou components in higher dimensional projective space were studied in  \cite{FS1994, FS1995b, HP1994, JL2003, LP2012, Ueda1994, U2008, Weickert2003}. The following definition is due to Bedford and Smillie \cite{BS1991a}.

\begin{definition}
An invariant Fatou component $\Omega$ is called \emph{recurrent} if there exists an orbit in $\Omega$ with an accumulation point in $\Omega$.
\end{definition}

It follows that if $\Omega$ is non-recurrent then all orbits in $\Omega$ converge to the boundary $\partial \Omega$. Recurrent Fatou components were classified in 1995 by Sibony and the second author \cite{FS1995}:

\begin{theorem}[Forn{\ae}ss-Sibony]\label{thm:recurrent}
Let $F$ be a holomorphic endomorphism with a recurrent invariant Fatou component $\Omega$. Then one of the following holds.
\begin{enumerate}
\item $\Omega$ is the basin of an attracting fixed point $p \in \Omega$.
\item \label{case:two} All orbits in $\Omega$ converge to a closed invariant $1$-dimensional submanifold $\Sigma \subset \Omega$. The map $F$ acts on $\Sigma$ as an irrational rotation, and $\Sigma$ is biholomorphically equivalent to either the disk, the punctured disk, or an annulus.
\item $\Omega$ is a \emph{Siegel domain}: There exists a sequence $(n_j)$ so that $F^{n_j}$ converges uniformly on compact subsets of $\Omega$ to the identity map.
\end{enumerate}
\end{theorem}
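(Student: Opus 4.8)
\smallskip
The plan is to read off the trichotomy from the structure of the set of subsequential limits of the iterates. Since $\Omega$ lies in the Fatou set, $\{F^n\}$ is a normal family on $\Omega$; write $\Meul$ for the (nonempty, compact) set of all maps $\Omega\to\proj^2$ that arise as locally uniform limits of subsequences of $\{F^n\}$. Recurrence provides $z_0\in\Omega$ and a subsequence with $F^{m_k}(z_0)\to z_*\in\Omega$, and after refining we may assume $F^{m_k}\to h\in\Meul$ with $h(z_0)=z_*$, so that $h^{-1}(\Omega)$ is a nonempty open set on which $h$ maps into $\Omega$. A diagonal argument applied to the differences $F^{m_{k+1}-m_k}$ (arranging $m_{k+1}-m_k\to\infty$) yields a limit map $G\in\Meul$ with $G(z_*)=z_*$, and passing to limits of products $F^{a_k}\circ F^{b_{\sigma(k)}}$ shows that $\Meul$ is closed under composition whenever the intermediate image stays in $\Omega$; thus $\Meul$ carries a partial semigroup structure. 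The argument then splits according to
\[ d=\max\bigl\{\operatorname{rank}(dh_w)\ :\ h\in\Meul,\ w\in\Omega,\ h(w)\in\Omega\bigr\}\in\{0,1,2\}, \]
the values $0,2,1$ producing cases (1), (3), (2) respectively.

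If $d=2$, some $h=\lim F^{n_j}$ is a local biholomorphism near a point $w_0$ with $h(w_0)\in\Omega$. Passing to a limit $h'$ of $F^{n_{j+1}-n_j}$ and using the semigroup relation gives $h'\circ h=h$ near $w_0$; since $h$ has full rank, $h'=\operatorname{id}$ near $h(w_0)$, hence $F^{n_{j+1}-n_j}\to\operatorname{id}$ on a neighbourhood and, by the identity theorem and connectedness of $\Omega$, on all of $\Omega$, so $\Omega$ is a Siegel domain. If $d=0$, every relevant limit map is constant, so by recurrence some $h\equiv p$ with $p\in\Omega$; composing the constant limit map with $F$ on both sides gives $F(p)=p$, and since $F$ has no repelling behaviour on the Fatou set while an eigenvalue of $dF_p$ on the unit circle would force a limit map of positive rank at $p$, the fixed point $p$ is attracting. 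Then $F^n\to p$ near $p$, which by the identity theorem forces every limit map to be $\equiv p$; hence $F^n\to p$ locally uniformly on $\Omega$ and $\Omega$ is the immediate basin of $p$.

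The case $d=1$ is the heart of the matter. Here every limit map has $1$-dimensional image, and one must produce a single closed $1$-dimensional submanifold $\Sigma\subset\Omega$ that is $F$-invariant, on which $F$ restricts to an automorphism, and to which every orbit in $\Omega$ converges. The crucial — and, I expect, hardest — point is uniqueness and regularity of the limit curve: using the partial semigroup structure together with the recurrence of $z_0$, one shows that a suitably chosen rank-one limit map $h$ restricts to the identity on its own image $\Sigma:=\overline{h(\Omega)}\cap\Omega$. Consequently $\Sigma$ is a holomorphic retract of one of its neighbourhoods, hence a complex submanifold, closed in $\Omega$; applying $F$ to $h$ via $F\circ h=\lim F^{n_j+1}$ gives $F(\Sigma)=\Sigma$ with $F|_\Sigma$ injective, hence an automorphism; and, because $F^{n_j}|_\Sigma\to\operatorname{id}$, every orbit of $F$ in $\Omega$ accumulates only on $\Sigma$ and $F|_\Sigma$ has recurrent dynamics.

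It remains to identify $\Sigma$. It is a Riemann surface carrying an automorphism with recurrent dynamics; after excluding the degenerate possibility that some iterate of $F|_\Sigma$ is the identity (which would exhibit a curve of fixed points with transverse contraction and push $\Omega$ back into case (1)), $F|_\Sigma$ has infinite order, so the identity component of $\Aut(\Sigma)$ is positive-dimensional and $\Sigma$ is one of $\proj^1$, a complex torus, $\C$, $\C^*$, the disk, the punctured disk, or an annulus, with $F|_\Sigma$ conjugate to an irrational rotation. The compact and parabolic models are then ruled out by positivity/hyperbolicity of the germ of $\Omega$ along $\Sigma$ — e.g. a line in $\proj^2$ has ample normal bundle and admits no neighbourhood retracting onto it, and $\C$, $\C^*$ and tori are excluded similarly — leaving exactly the disk, the punctured disk and the annulus, which is case (2). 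The only step requiring real work beyond soft limit-map bookkeeping is the uniqueness and submanifold structure of $\Sigma$ in the case $d=1$; everything else reduces to the identity theorem, or, on $\Sigma$, to Fatou's classical classification together with standard facts about hyperbolic Riemann surfaces.
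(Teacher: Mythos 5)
First, a point of reference: the paper does not prove Theorem \ref{thm:recurrent} at all --- it is quoted from Forn{\ae}ss--Sibony \cite{FS1995} --- so your proposal has to be measured against that original argument. Your skeleton (normality, the partial semigroup of subsequential limit maps, and the split according to the maximal rank $d \in \{0,1,2\}$ of a limit map at a point sent into $\Omega$) is indeed the architecture of the known proof, and your treatments of $d=2$ (the relation $h' \circ h = h$, local invertibility, identity theorem) and $d=0$ (constant limit in $\Omega$, fixed point, eigenvalue dichotomy) are essentially complete modulo routine details.

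The genuine gap is in $d=1$, which is the whole substance of case (2) and the case this paper actually relies on. You assert, but do not prove, that ``a suitably chosen rank-one limit map $h$ restricts to the identity on its own image'': this is exactly where recurrence has to be exploited to manufacture an idempotent limit map ($h \circ h = h$), by taking limits of $F^{n_{j+1}-n_j}$ at the recurrent point and showing that the one-dimensional image is preserved; without that step there is no $\Sigma$, no retract/submanifold structure, and no invariance. Likewise ``every orbit accumulates only on $\Sigma$'' does not follow from $F^{n_j}|_\Sigma \to \mathrm{id}$: you must show that \emph{every} limit map of the full sequence of iterates has image inside $\Sigma$, which again requires the semigroup bookkeeping rather than a one-line remark. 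Finally, your exclusion of $\mathbb{P}^1$, $\mathbb{C}$, $\mathbb{C}^*$ and tori via ``ample normal bundle \dots excluded similarly'' is not an argument: $\mathbb{C}$ and $\mathbb{C}^*$ are non-compact, carry no positivity to appeal to, and nothing you wrote prevents them from sitting as closed submanifolds of the open set $\Omega$. The standard route is that Fatou components of holomorphic endomorphisms of $\mathbb{P}^2$ (of degree at least $2$) are Kobayashi hyperbolic (Ueda \cite{Ueda1994}), so the closed submanifold $\Sigma \subset \Omega$ is a hyperbolic Riemann surface; combined with recurrence this forces $F|_\Sigma$ to be an elliptic automorphism of infinite order, i.e.\ an irrational rotation of a disk, punctured disk or annulus. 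So: right decomposition and correct easy cases, but the heart of case (2) is asserted rather than proved, and the Riemann-surface exclusion step as written would fail.
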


The punctured disk in Case \eqref{case:two} was ruled out in 2008 by Ueda \cite{U2008}.

\begin{theorem}[Ueda]\label{thm:Ueda}
The invariant submanifold $\Sigma$ in Case \eqref{case:two} of Theorem \ref{thm:recurrent} cannot be equivalent to a punctured disk.
\end{theorem}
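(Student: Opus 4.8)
The plan is to localize at the puncture of $\Sigma$ and contradict the fact that the puncture lies on $\partial\Omega$, hence in the Julia set $J$. Suppose $\Sigma$ is biholomorphic to the punctured disk $\mathbb{D}^{*}=\{0<|z|<1\}$. Since $\Aut(\mathbb{D}^{*})$ consists only of the rotations $z\mapsto e^{i\theta}z$, the map $F|_{\Sigma}$ is conjugate to an irrational rotation $z\mapsto\lambda z$ with $|\lambda|=1$ and $\lambda$ not a root of unity. As in the analysis of Case~\eqref{case:two}, a holomorphic retraction $\rho\colon\Omega\to\Sigma$ with $\rho|_{\Sigma}=\mathrm{id}$ and $\rho\circ F=F\circ\rho$ is obtained as a locally uniform limit of $F^{n_{j}}|_{\Omega}$ for suitable $n_{j}\to\infty$ with $\lambda^{n_{j}}\to1$; composing with the biholomorphism $\Sigma\cong\mathbb{D}^{*}$ gives a holomorphic surjection $g\colon\Omega\to\mathbb{D}^{*}$ with $g\circ F=\lambda g$, so that $-\log|g|$ is a positive, $F$-invariant pluriharmonic function on $\Omega$.

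Next I pass to the puncture. The puncture end of $\Sigma$ leaves every compact subset of $\Omega$, so it accumulates on a nonempty compact $F$-invariant set $K\subset\partial\Omega$, and the first task is to prove that $K$ is a single point $p$. Granting this, the end of $\Sigma$ lying over $\{0<|z|<\varepsilon\}$ is bounded in an affine chart containing $p$, so by the Riemann removable singularity theorem $\overline{\Sigma}$ near $p$ is a holomorphic disk $D$ with $D\setminus\{p\}\subset\Sigma$, $F(p)=p$, $F$ acting on $D$ as $z\mapsto\lambda z$, and $p\in\partial\Omega\subset J$. Then $dF_{p}$ has a neutral eigenvalue, tangent to $D$ and equal to $\lambda$ (if $D$ is singular at $p$, replace $\lambda$ by the corresponding eigenvalue $\lambda^{j}$ of $dF_{p}$ and work with the normalization of $D$---this changes nothing below); let $\mu$ be the transverse eigenvalue. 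I claim $0<|\mu|<1$. The open set $V:=g^{-1}(\{0<|z|<\varepsilon\})\subset\Omega$ is $F$-invariant, and its orbits stay near $p$---their $\rho$-images run over the small circles of $\mathbb{D}^{*}$---and converge to $D$. In coordinates near $p$ with $D=\{y=0\}$ and $F(x,0)=(\lambda x,0)$, the transverse multiplier $m(x):=\partial_{y}F_{2}(x,0)$ is holomorphic near $0$ with $m(0)=\mu$, and convergence of the $V$-orbits to $D$ forces $\prod_{k<n}|m(x_{k})|\to0$ for an orbit whose $x_{k}$ stays close to $\lambda^{k}x_{0}$; since $\log|m|$ is harmonic where $m\neq0$, the mean value property gives $\tfrac1n\sum_{k<n}\log|m(\lambda^{k}x_{0})|\to\log|\mu|$, so $\log|\mu|\le0$. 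The borderline case $|\mu|=1$ (and, when $dF_{p}$ is singular, $\mu=0$) is excluded separately: if $|\mu|=1$ both eigenvalues at $p$ are neutral and an open family of orbits cannot converge to a single invariant curve through $p$.

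Thus $p$ is a semi-attracting fixed point with a neutral eigenvalue $\lambda$ ($|\lambda|=1$, not a root of unity) and a contracting one $\mu$ ($0<|\mu|<1$), and there are no resonances $\lambda^{a}\mu^{b}\in\{\lambda,\mu\}$ with $a+b\ge2$ (those with $b\ge1$ fail on modulus; those with $b=0$ would force $\lambda$ to be a root of unity). Consequently $D$ is a normally contracting invariant curve which is $r$-normally hyperbolic for every $r$ since $|\mu|<1$, so near $p$ there is a holomorphic strong stable foliation transverse to $D$; projecting along its leaves gives a holomorphic, $F$-equivariant local retraction $\rho_{\mathrm{loc}}\colon U(p)\to D$ with $\rho_{\mathrm{loc}}|_{D}=\mathrm{id}$, along whose fibres $F$ contracts at a uniform exponential rate comparable to $|\mu|^{n}$. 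For $q\in U(p)$ one has $\rho_{\mathrm{loc}}(F^{n}q)=F^{n}(\rho_{\mathrm{loc}}q)$, which runs over a small circle in $D$, while $\operatorname{dist}(F^{n}q,F^{n}\rho_{\mathrm{loc}}q)\to0$ uniformly; hence any subsequence with $\lambda^{n_{j}}\to\alpha$ makes $F^{n_{j}}$ converge uniformly on $U(p)$ to the holomorphic map $q\mapsto\alpha\,\rho_{\mathrm{loc}}(q)$ in the linear coordinate on $D$. So $\{F^{n}\}$ is normal near $p$, i.e.\ $p$ lies in the Fatou set, contradicting $p\in J$.

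I expect two steps to be genuinely delicate. First, showing that the puncture end converges to a single point $p$ at which $\overline{\Sigma}$ is holomorphic: here one should exploit the $F$-invariance of $K$, the positive $F$-invariant pluriharmonic function $-\log|g|$, and the geometry of $\Omega$ near $\partial\Omega$, together with a compactly-contained-chart argument so that the Riemann extension theorem applies. Second, the construction of the holomorphic strong stable foliation near $p$ and of the equivariant retraction $\rho_{\mathrm{loc}}$ with uniform leafwise contraction: one must combine the precise spectral picture ($|\mu|<1=|\lambda|$, no resonances) with the availability of a genuine holomorphic center manifold---namely $D=\overline{\Sigma}$---so that no center-manifold theory is required. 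The intermediate claim $|\mu|<1$, and the exclusion of the neutral case $|\mu|=1$, are routine in spirit but rest on a careful equidistribution estimate for products of transverse multipliers.
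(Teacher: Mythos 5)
This theorem is not proved in the paper at all: it is quoted from Ueda \cite{U2008}, whose argument goes through a removable-singularity theorem for \emph{Fatou maps} (the inclusion of $\Sigma\cong\mathbb{D}^*$ into $\mathbb{P}^2$ is a map $\iota$ with $\{F^n\circ\iota\}$ normal, and Ueda proves that such maps extend holomorphically across the puncture, which is then shown to be incompatible with the recurrent picture). Your proposal replaces this by a local analysis at a hypothetical limit point $p$ of the puncture end, but the two steps you yourself flag as ``delicate'' are not technicalities to be filled in later: one of them is the actual mathematical content of Ueda's theorem, and the other is supported by a claim that is false.

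Concretely: (1) the assertion that the accumulation set $K\subset\partial\Omega$ of the puncture end is a single point $p$, so that $\overline{\Sigma}$ closes up to a holomorphic disk through a fixed point, is essentially equivalent to Ueda's extension theorem; nothing in your sketch (invariance of $K$, the invariant pluriharmonic function $-\log|g|$) produces it, and without it the rest of the argument never starts. (2) Your exclusion of the neutral transverse eigenvalue rests on the statement that when both eigenvalues at $p$ have modulus one ``an open family of orbits cannot converge to a single invariant curve through $p$.'' This is contradicted by Section 2 of this very paper: for $f(z,w)=(\lambda z+z^3,\lambda^{-1}(w+zw^2)+w^3)$ one has $df_0=\mathrm{diag}(\lambda,\lambda^{-1})$, both neutral, yet an entire Fatou component has all orbits converging to the invariant curve $\{w=0\}$ through the fixed point. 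Ruling out $|\mu|=1$ would have to use recurrence (that $\Sigma\subset\Omega$ carries the retraction $\rho$) in an essential way -- this is exactly the recurrent/non-recurrent dichotomy the paper is about -- and your equidistribution estimate gives at best $\log|\mu|\le 0$, and even that is incomplete (convergence of orbits to $D$ does not by itself force the product of transverse multipliers to tend to $0$, and the shadowing $x_k\approx\lambda^k x_0$ is assumed rather than proved); the case $\mu=0$ is dismissed with no argument. By contrast, the final step is over-engineered: granted $0<|\mu|<1$, uniform transverse contraction near $p$ together with the rotation along $D$ keeps the orbit of a small ball in a fixed compact chart, so $\{F^n\}$ is bounded there and normality follows without any non-resonance condition or holomorphic strong-stable foliation. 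As it stands, the proposal has genuine gaps precisely at the two decisive points.
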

All other cases are known to occur, which makes the classification of recurrent Fatou components for holomorphic endomorphisms of $\mathbb{P}^2$ complete.

The situation is more complicated in the non-recurrent case. If $\Omega$ is a non-recurrent invariant Fatou component then by normality there exists an increasing sequence $(n_j)$ such that the maps $f^{n_j}$ converge to a limit map $h: \Omega \rightarrow \partial \Omega$. The main difficulty in dealing with non-recurrent Fatou components arises because it is not known whether the limit set $h(\Omega)$ is always of the sequence $(n_j)$. If $h(\Omega)$ is independent of $(n_j)$ then we say that $\Omega$ has a \emph{unique limit set}. The following was proved in 2012 by Lyubich and the third author \cite{LP2012}.

\begin{theorem}[Lyubich-Peters]\label{thm:MishaHan}
Let $F$ be a holomorphic endomorphism of $\mathbb{P}^2$ of degree at least $2$, and let $\Omega$ be a non-recurrent invariant Fatou component with a unique limit set. Then $h(\Omega)$ is either a fixed point, or $h(\Omega)$ is equivalent to the unit disk, an annulus, or a punctured disc, and $F$ acts on $h(\Omega)$ as an irrational rotation.
\end{theorem}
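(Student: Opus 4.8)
The plan is to extract a single limit map, squeeze as much regularity and dynamical rigidity as possible out of the uniqueness of the limit set, and then invoke the classification of automorphisms of Riemann surfaces. First fix an increasing sequence $(n_j)$ with $F^{n_j}\to h$ locally uniformly on $\Omega$; by non-recurrence $h(\Omega)\subseteq\partial\Omega$, so $\Sigma:=h(\Omega)$ has empty interior in $\mathbb P^2$ and hence $dh$ has rank at most $1$ everywhere. Three elementary consequences of the uniqueness hypothesis should be recorded immediately: (i) every locally uniform limit $g$ of a subsequence of $(F^m)_m$ on $\Omega$ has image exactly $\Sigma$, so applying this to $(n_j+1)$ and using continuity of $F$ on $\mathbb P^2$ gives $F(\Sigma)=\Sigma$ together with $h\circ F=F\circ h$ on $\Omega$; (ii) for the same reason every orbit in $\Omega$ accumulates only on $\Sigma$ (refine a convergent subsequence of an orbit to a convergent subsequence of iterates); (iii) if $h$ is constant then $\Sigma=\{p\}$ with $F(p)=p$, which is the first alternative. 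From now on assume $\Sigma$ is one-dimensional; then $h$ factors as $h=\iota\circ\widehat h$ through a Riemann surface $\widehat\Sigma$, with $\widehat h\colon\Omega\to\widehat\Sigma$ surjective and $\iota\colon\widehat\Sigma\to\mathbb P^2$ a generically immersive parametrization of $\Sigma$, the self-map $F|_\Sigma$ lifts to a surjective $\widehat F\colon\widehat\Sigma\to\widehat\Sigma$, and $\widehat h$ is a surjective semiconjugacy from $(\Omega,F)$ to $(\widehat\Sigma,\widehat F)$.

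The next step is to show that $\widehat F$ is an automorphism of a hyperbolic Riemann surface and that $\iota$ is an embedding. That $\widehat F$ is an automorphism rather than a genuine branched self-cover is the first place the uniqueness hypothesis is used in an essential way: comparing $h=\lim F^{n_j}$ with limits $g=\lim F^{n'_k}$ taken along sequences with $n'_k-n_j\to\infty$ produces a family of limit maps of $\Omega$, all with image $\Sigma$, closed under the relevant compositions; transporting this structure through $\widehat h$ forces $\widehat F$ to be an isometry of the hyperbolic (or, before hyperbolicity is known, the Kobayashi) pseudometric of $\widehat\Sigma$ --- its iterates can neither contract distances nor ramify --- hence an automorphism. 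One then excludes $\widehat\Sigma$ compact: a compact $\Sigma$ would be an invariant algebraic curve, and a holomorphic endomorphism of $\mathbb P^2$ of degree $\ge 2$ restricts to an invariant curve with degree $\ge 2$, contradicting that $\widehat F$ is an automorphism (this is the normal-bundle computation for invariant lines and conics, or the Forn\ae ss--Sibony analysis of invariant curves); and one excludes $\widehat\Sigma\cong\mathbb C$ or $\mathbb C^*$ by the same contradiction, since by Bishop--Remmert--Stein the closure of such a $\Sigma$ in $\mathbb P^2$ would again be an invariant algebraic curve carrying an automorphism action, once the thinness of $\overline\Sigma\setminus\Sigma$ is checked. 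Finally, $\iota$ is an embedding because a self-intersection point or a cusp of $\Sigma$ would be a point fixed by an iterate of $F$ whose local normal form is incompatible with the convergence of the orbits of $\Omega$ toward $\Sigma$ near that point. Thus $\widehat\Sigma$ is hyperbolic, $\widehat F\in\Aut(\widehat\Sigma)$, and $\Sigma\cong\widehat\Sigma$.

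It remains to classify the pair $(\widehat\Sigma,\widehat F)$. An automorphism of a hyperbolic Riemann surface is, up to conjugacy, of finite order or, lifted to the universal cover $\mathbb D$, a parabolic, a hyperbolic, or an infinite-order elliptic Möbius transformation. Finite order is excluded: if $\widehat F^k=\mathrm{id}$ on $\widehat\Sigma$ then $F^k$ is the identity on $\Sigma$ and, being normally contracting toward $\Sigma$ since the orbits of $\Omega$ converge to $\Sigma$, it would place a full neighborhood of $\Sigma$ inside the Fatou set, contradicting $\Sigma\subseteq\partial\Omega$ --- the subtle point here is the exclusion of a neutral normal direction, which is the analytic heart of the matter and parallels Ueda's theorem. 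Parabolic and hyperbolic are excluded because then $\widehat F^n$ converges locally uniformly to a constant map with value in the ideal boundary of $\widehat\Sigma$, and feeding this through the semiconjugacy $\widehat h$ would collapse the limit set to a point, against $\dim_{\mathbb C}\Sigma=1$. Hence $\widehat F$ is an infinite-order elliptic automorphism, fixing some $q\in\widehat\Sigma$ and conjugate near $q$ to $w\mapsto\lambda w$ with $|\lambda|=1$ and $\lambda$ not a root of unity; a hyperbolic Riemann surface admitting such an $S^1$-symmetry is biholomorphic to the disk, the punctured disk, or an annulus, and $F$ acts on $h(\Omega)=\Sigma$ as the irrational rotation. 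The principal obstacle is the middle step: proving that $F$ restricts to $\Sigma$ as an \emph{automorphism} and that $\Sigma$ is \emph{smooth}, which is exactly where the uniqueness of the limit set must be exploited through a delicate comparison of limit maps along different subsequences, and where the possible presence of a neutral normal direction has to be ruled out.
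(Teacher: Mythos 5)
First, a structural point: the paper you are working from does not prove this statement at all --- it is quoted from Lyubich--Peters \cite{LP2012} and used as a black box. The only fragments of that proof visible here are Theorem \ref{smoothness} (a rank~$1$ limit map has injectively immersed image) and the appeal to ``Lemma 13 of \cite{LP2012}'' (the iterates of $F$ restricted to $h(\Omega)$ again form a normal family). So there is no in-paper proof to compare against; your proposal has to stand on its own, and as it stands it is a roadmap rather than a proof, with gaps located exactly at the steps you yourself flag as the heart of the matter.

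Concretely: (1) the step ``uniqueness of the limit set forces $\widehat F$ to be an isometry of the Kobayashi pseudometric, hence an automorphism'' is asserted, not argued; the known route is to first prove that the iterates of $F$ restricted to $h(\Omega)$ form a normal family and that limit maps are surjective onto $h(\Omega)$, and you establish neither. (2) Your argument for embeddedness of $\Sigma$ is wrong as stated: a self-intersection point or cusp of the immersed limit set need not be fixed by any iterate of $F$, so there is no ``local normal form at a fixed point'' to contradict. The actual mechanism (recalled in this paper's proof of Theorem \ref{thm:regular}) is different: one takes a small holomorphic disk $D$ through a point of $\Omega$ with $h(D)=h(U)$, pushes it forward, and derives a contradiction from the fact that $F^n(D)$ lies in the Fatou set while the second local branch of $\Sigma$ lies in the Julia set. (3) The exclusion of finite order --- i.e.\ ruling out a neutral normal direction along $\Sigma$, the analogue of Ueda's theorem --- is explicitly deferred in your write-up (``parallels Ueda's theorem''), but this is precisely the analytic content that cannot be waved at; without it the case that $F$ restricted to $\Sigma$ is periodic, or semi-parabolic transverse behaviour, is not excluded. (4) The exclusions of $\Sigma$ compact, $\cong\mathbb{C}$, or $\cong\mathbb{C}^*$ hinge on unverified claims (thinness of $\overline\Sigma\setminus\Sigma$ for Remmert--Stein, and the degree of the restriction of $F$ to an invariant algebraic curve). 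The overall architecture --- constant limit case, then smooth invariant Riemann surface, then automorphism with normal iterates, then rotation domain classification --- is the right shape and consistent with what \cite{LP2012} does, but items (1)--(4) are genuine missing proofs, and (2) is an incorrect argument, so the proposal does not constitute a proof of the theorem.
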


Examples where $h(\Omega)$ is a fixed point, a rotating disk or a rotating annulus were known to exist, but whether the punctured disk could also exist remained open. In light of the aforementioned result by Ueda one might expect the punctured disk not to exist in the non-recurrent case either. In Theorem \ref{main} we give an explicit construction of a non-recurent Fatou component with a unique limit set equivalent to a punctured disk.

In the last section we will give a general construction for many more punctured limit sets.

\begin{theorem}\label{thm:regular}
Let $V \in \mathbb{C}^2$ be a pure one-dimensional analytic set. Then there exist a holomorphic endomorphism $F$ of $\mathbb{C}^2$ such that for every irreducible component $V_1$ of $V$ the map $F$ has a non-recurrent Fatou component $\Omega$ on which all orbits converge to $V_1 \setminus \mathrm{Sing}(V)$.
\end{theorem}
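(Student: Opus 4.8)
The plan is to reduce the general statement to a local model near a smooth point of $V$ and then propagate. First I would work one irreducible component $V_1$ at a time; after a polynomial automorphism of $\mathbb{C}^2$ I may assume $V_1$ passes through the origin with a prescribed tangent, and near a generic smooth point $p \in V_1 \setminus \mathrm{Sing}(V)$ there are local coordinates $(z,w)$ in which $V_1 = \{w = 0\}$. The target is an endomorphism $F$ that fixes $V_1$ setwise, acts on it by a map with an attracting or rotation-type behavior so that orbits stay on $V_1 \setminus \mathrm{Sing}(V)$, and is \emph{normally attracting} transversally to $V_1$, i.e. the derivative in the $w$-direction along $V_1$ has modulus $<1$. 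Concretely one wants $F(z,w) = (g(z) + w\cdot(\cdots), \lambda(z) w + O(w^2))$ with $|\lambda| < 1$ on the relevant part of $V_1$; the basin of $V_1 \setminus \mathrm{Sing}(V)$ under such an $F$ is then a non-recurrent Fatou component with the desired limit behavior. Since $V$ is only given as an analytic set in $\mathbb{C}^2$, the first task is to produce a \emph{polynomial} (or at least holomorphic) $F$ globally on $\mathbb{C}^2$ realizing this local picture: one writes $V_1 = \{P = 0\}$ for an irreducible polynomial (or holomorphic function) $P$, and looks for $F$ of the shape $F = \mathrm{id} + P \cdot G$ for a suitable vector field / map $G$, which automatically fixes $V_1$, plus a correction making the transverse multiplier small.

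Next I would arrange the dynamics on $V_1$ itself. To get convergence to $V_1 \setminus \mathrm{Sing}(V)$ — rather than to a point of $\mathrm{Sing}(V)$ or to infinity — I would choose the restriction $F|_{V_1}$ to have a suitable attracting cycle or an attracting/semi-attracting fixed point located at a smooth point, so that a whole open subset of $V_1 \setminus \mathrm{Sing}(V)$ lies in the basin inside $V_1$; alternatively one engineers a Siegel-disk-type rotation on a smooth piece of $V_1$, mirroring the punctured-disk construction of Theorem~\ref{main}. The key point is that $\mathrm{Sing}(V)$ is a finite set of points, so generically one can place the attracting behavior away from it and keep orbits off it. Then one shows the Fatou component $\Omega$ — the connected component of the Fatou set containing the local basin — has all orbits converging to $V_1 \setminus \mathrm{Sing}(V)$: the transverse contraction $|\lambda|<1$ forces $w_n \to 0$ geometrically while $z_n$ stays in the chosen basin on $V_1$, so the limit set of $\Omega$ is contained in $V_1 \setminus \mathrm{Sing}(V)$ and $\Omega$ is non-recurrent because no orbit accumulates in $\Omega$ (the limit points lie on $V_1$, and $V_1 \cap \Omega$ — if nonempty — is not invariant in a way that gives recurrence; more simply, one checks the limit map $h$ has image in $V_1$).

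The hard part, I expect, will be the \emph{global} control: producing a single holomorphic (better: polynomial) endomorphism $F$ of $\mathbb{C}^2$ that (i) realizes the desired local normal-attraction along $V_1$, (ii) has the right one-dimensional dynamics on $V_1$, and (iii) does not create unwanted interactions — e.g. the zero locus of the transverse multiplier $\lambda$, or other branches of $V$, or the behavior at infinity — that would enlarge the Fatou component or destroy normality. For polynomial $F$ on $\mathbb{C}^2$ one must also worry about the extension to $\mathbb{P}^2$ and the line at infinity; if one only needs $F$ holomorphic on $\mathbb{C}^2$ (as the statement allows) this is avoided, but realizing an arbitrary analytic set $V$ — which may have infinitely many irreducible components and arbitrarily wild behavior at infinity — forces the holomorphic category and careful patching. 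I would handle this by a Runge-type or Carleman-type approximation / interpolation argument: first build $F$ formally along each $V_i$ with the prescribed $2$-jet (identity on $V_i$, multiplier $\lambda_i$ transversally), then invoke the fact that on $\mathbb{C}^2$ one can realize prescribed jets along an analytic set by a global holomorphic map (using that $\mathbb{C}^2$ is Stein and ideal sheaves are generated by global sections), and finally perturb to kill spurious fixed points and ensure the basin is exactly as desired. The delicate estimate is that the transverse multiplier stays $<1$ in modulus on all of $V_i \setminus \mathrm{Sing}(V)$ uniformly enough to guarantee the whole regular part, not just a compact piece, lies in the limit set — this is where the construction must be most carefully calibrated, quite possibly by allowing the Fatou component to be the union of an exhaustion of basins and arguing connectivity at each stage.
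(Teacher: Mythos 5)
There is a genuine gap, and it is the central mechanism. You propose to make $V_1$ \emph{normally attracting} (transverse multiplier $|\lambda|<1$ along $V_1$) and to put an attracting fixed point, attracting cycle, or Siegel-type rotation on $V_1$ itself. Any such choice produces the wrong kind of Fatou component: an attracting fixed point at a smooth point of $V_1$ lies in the interior of its basin, so the component is \emph{recurrent}; a rotation domain on $V_1$ with transverse contraction gives a closed invariant Riemann surface inside the component (case (2) of Theorem \ref{thm:recurrent}), again recurrent. Non-recurrence forces the limit set to sit on $\partial\Omega$, which rules out hyperbolic transverse attraction and requires a degenerate (parabolic) transverse behavior. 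Moreover, your limit set would be a point, a cycle, or a disk inside $V_1$ — not $V_1\setminus\mathrm{Sing}(V)$; the content of the theorem (and what the paper proves) is that the limit set is \emph{exactly} the regular part of the component. Your argument that one can "place the attracting behavior away from $\mathrm{Sing}(V)$ because it is finite" also misfires: $\mathrm{Sing}(V)$ is only discrete (possibly infinite) for a general analytic set, and in any case exclusion of the singular points is not obtained by positioning; in the paper it follows from the Lyubich–Peters smoothness theorem (Theorem \ref{smoothness}), since the limit set must be an injectively immersed Riemann surface, together with the observation that other branches of $V$ lie in the Julia set.

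The idea you are missing is to take $F$ \emph{tangent to the identity along all of $V$}: the paper sets $F=\mathrm{Id}+g^k\,(P,Q)$ with $k\ge 2$, where $g$ is a minimal defining function of $V$ (its existence on the Stein space $\mathbb{C}^2$ uses the solvability of Cousin II on one-dimensional Stein spaces), and $P,Q$ are global holomorphic functions chosen so that $g_zP+g_wQ$ does not vanish on $\mathrm{Reg}(V)$ (Lemma \ref{lem:PQ}, proved via normalization of $V$, the Weierstrass theorem, universal denominators at the singular points, Cousin II, and Cartan's theorems). The nonvanishing condition is exactly the statement that every regular point of $V$ has a non-degenerate characteristic direction, so Hakim's theorem attaches $k-1$ parabolic curves to \emph{each} point of $\mathrm{Reg}(V)$, varying continuously with the base point; the union of the associated attracting sectors is contained in an invariant Fatou component whose orbits converge to $V$, with $V$ in the boundary (hence non-recurrence), and the limit set equals $\mathrm{Reg}(V)$. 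Your Runge/Carleman jet-interpolation sketch does not isolate this nonvanishing condition, and without the tangent-to-identity structure the rest of your outline cannot deliver either non-recurrence or convergence to the whole regular part.
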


The idea is the following. We will construct a map $F = \mathrm{Id} + G$, where $G$ vanishes on the analytic set $V$. By our construction the map $F$ will have a \emph{parabolic curve} attached to each point $(z,w) \in  \mathrm{Reg}(V)$, and these curves vary continuously with $(z,w)$. The union of these curves will be contained in an inviariant Fatou component whose orbits converge to $V$. The fact that the singular points of $V$ are not contained in the limit set then follows from the following result from \cite{LP2012}.

\begin{theorem}[Lyubich-Peters]\label{smoothness}
Let $X$ be a $2$-dimensional complex manifold and $F: X \rightarrow X$ a holomorphic endomorphism. Let $\Omega \subset X$ be an invariant Fatou component and suppose that the sequence $(f^{n_j})$ converges uniformly on compact subsets of $\Omega$ to a rank $1$ limit map $h: \Omega \rightarrow \partial \Omega$. Then $h(\Omega)$ is an injectively immersed Riemann surface.
\end{theorem}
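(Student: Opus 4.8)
The two structural inputs are that $h$ is holomorphic of pointwise rank at most one (being a locally uniform limit of holomorphic maps whose image lies in $\partial\Omega$, so the rank cannot be $2$), and that $h$ intertwines the dynamics. I would first record the latter. From $F^{n_j+1}=F\circ F^{n_j}=F^{n_j}\circ F$, taking locally uniform limits of the two factorizations gives
\[
F\circ h \;=\; h\circ F \qquad\text{on } \Omega .
\]
Both limits are legitimate: in $F\circ F^{n_j}$ one uses continuity of $F$, and in $F^{n_j}\circ F$ one uses that $F(\Omega)\subseteq\Omega$, so the inner map keeps us on compacts of $\Omega$ where $F^{n_j}\to h$. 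Hence $F(\Sigma)\subseteq\Sigma$ for $\Sigma:=h(\Omega)$, and $F$ permutes the fibers $h^{-1}(p)$. This equivariance is the dynamical mechanism that will ultimately forbid singular points of $\Sigma$.

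Next I would treat the local structure. Let $C=\{z\in\Omega:\operatorname{rank} dh_z=0\}$; this is an analytic subset, and $U:=\Omega\setminus C$ is open and nonempty because $h$ has rank one. On $U$ the rank is constant equal to one, so the constant rank theorem gives, near each point of $U$, holomorphic coordinates in which $h$ is a coordinate projection: the fibers $h^{-1}(p)\cap U$ are smooth curves and $h(U)$ is an embedded holomorphic curve. Thus $h(U)$ is naturally an immersed Riemann surface, and the task is to extend this conclusion across the images of $C$.

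I would then assemble the global object. Using $F\circ h=h\circ F$ one checks that the fibers of $h$ are connected one-dimensional analytic sets, so that the leaf space $S:=\Omega/\!\sim$ (with $z\sim z'$ iff $h(z)=h(z')$) is Hausdorff; the local projections on $U$ furnish holomorphic charts making $S$ a Riemann surface, and $h$ factors as $h=\iota\circ\pi$ with $\pi\colon\Omega\to S$ a holomorphic surjection and $\iota\colon S\to X$ the induced map. By construction distinct points of $S$ are distinct fibers, hence carry distinct $h$-values, so $\iota$ is injective and $\iota(S)=\Sigma$. It remains only to prove that $\iota$ is an \emph{immersion}, i.e. that $\Sigma$ has no singular (cuspidal) points arising from the critical fibers over $C$.

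This last point is the crux, and the step I expect to be the main obstacle. The danger is that over a critical fiber the image develops a cusp such as $t\mapsto(t^2,t^3)$ rather than a smooth arc, which would destroy the immersion. The statement to prove is local: near every $p_0\in\Sigma$ the curve $\Sigma$ is smooth. My first approach is to spread smoothness by the dynamics. Wherever $F$ is a local biholomorphism of $X$, the relation $F\circ h=h\circ F$ carries $\Sigma$ into $\Sigma$ and hence smooth points to smooth points; since $h(U)$ is dense in $\Sigma$, any non-smooth point must sit over the critical set of $F$, a thin exceptional locus. At such residual points I would argue directly, comparing the disks $F^{n_j}\circ\gamma$ (for a holomorphic disk $\gamma$ through a preimage of $p_0$ with $h\circ\gamma$ nonconstant) with their limit $h\circ\gamma$: the maps $F^{n_j}$ are non-increasing for the Kobayashi metric of $\Omega$, and one shows that the direction tangent to $\Sigma$ survives in the limit with nondegenerate length, whereas a cusp would force that length to vanish. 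Carrying this estimate out uniformly near $\partial\Omega$, where the metric degenerates, is the genuinely delicate part of the argument. Once it is established, $\iota\colon S\to X$ is an injective immersion, so that $\Sigma=h(\Omega)$ is an injectively immersed Riemann surface.
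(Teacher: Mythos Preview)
This theorem is not proved in the present paper: it is quoted from \cite{LP2012} and only invoked as a black box (the paper does recall one step of the argument later, the Hurwitz-type intersection trick, but never gives a proof of Theorem~\ref{smoothness}). So there is no ``paper's own proof'' to compare against here; I can only assess your outline on its merits.

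Your scaffolding is reasonable---the semiconjugacy $F\circ h=h\circ F$, the constant-rank picture away from the critical set, and the attempt to build a quotient Riemann surface $S$ with $h=\iota\circ\pi$---but two load-bearing steps are not actually carried out. First, you assert that ``using $F\circ h=h\circ F$ one checks that the fibers of $h$ are connected one-dimensional analytic sets, so that the leaf space $S$ is Hausdorff.'' The semiconjugacy only tells you that $F$ permutes fibers; it says nothing about their connectedness, and without connectedness (and without ruling out isolated points in fibers over critical values) the quotient need not be a Hausdorff Riemann surface. This needs a separate argument. Second, and more seriously, you correctly isolate the crux---ruling out cusps of $\Sigma$ over the critical fibers---but you do not prove it. Your first idea (propagating smoothness by $F$) does not close the argument, since the putative singular points of $\Sigma$ lie on $\partial\Omega$, where you have no control on invertibility of $F$ and where ``smooth points are dense'' does not by itself exclude an isolated cusp. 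Your second idea, a Kobayashi-length estimate showing a tangent direction ``survives in the limit,'' is only a sketch; you yourself flag that the metric degenerates near $\partial\Omega$, which is exactly where $\Sigma$ sits, so the inequality you need is not available without further work. As written, the proposal identifies the right difficulty but does not resolve it; the argument in \cite{LP2012} (a piece of which is recalled in the proof of Theorem~\ref{thm:regular}) proceeds instead via a Hurwitz-type convergence argument for the disks $F^{n_j}(D)$, and that is the missing ingredient here.
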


The local dynamics of maps tangent to the identity plays an important role in our results. Let us recall the basic notions here. Let $F : (\mathbb{C}^2, p) \rightarrow (\mathbb{C}^2, p)$ be the germ of a holomorphic map. If $DF(p) = \mathrm{Id}$ then we say that $F$ is \emph{tangent to the identity} at the point $p$. In other words, after changing coordinates by a translation $F$ takes the form
\begin{equation*}
F = \mathrm{Id} + F_k + F_{k+1} + \ldots,
\end{equation*}
where each $F_j$ is a homogeneous polynomial of degree $j$, with \emph{order} $k \ge 2$. Following Hakim \cite{Hakim1998} we say that $v \in \mathbb{C}^2$ is a \emph{characteristic direction} for $F$ if there exists a $\lambda \in \mathbb{C}$ so that
\begin{equation*}
F_k(v) = \lambda v,
\end{equation*}
If $\lambda = 0$ then $v$ is said to be \emph{degenerate}, while if $\lambda \neq 0$ then $v$ is \emph{non-degenerate}. An orbit $\{F^n(z)\}$ is said to converge to the origin \emph{tangentially} to $v$ if $F^n(z) \rightarrow 0$ and $[F^n(z)] \rightarrow [v]$ in $\mathbb{P}^1$.

A \emph{parabolic curve} for $F$ tangent to $[v]\in \mathbb{P}^{1}$ is an injective holomorphic map $\varphi: \mathbb{D}\rightarrow\mathbb{C}^2\backslash\{0\}$, satisfying the following properties:
\begin{itemize}
\item $\varphi$ is continuous at $1\in\partial \mathbb{D}$ and $\varphi(1)=0$,
\item $\varphi(\mathbb{D})$ is $F$-invariant and $(F|_{\varphi(\mathbb{D})})^n\rightarrow 0$ uniformly on compact subsets,
\item $[\varphi(\zeta)]\rightarrow[v]$ as $\zeta\rightarrow 1$ in $\mathbb{D}$.
\end{itemize}

\begin{theorem}[Hakim]\label{thm:Hakim} Let $F: (\mathbb{C}^2,0) \rightarrow (\mathbb{C}^2,0)$ be a holomorphic germ tangent to the identity of order $k \ge 2$. Then for any non-degenerate characteristic direction $v$ there exist (at least) $k-1$ parabolic curves for $F$ tangent to $[v]$.
\end{theorem}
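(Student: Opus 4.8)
The plan is to follow the standard route to parabolic curves: normalize, blow up once, run the one–variable Leau--Fatou theory along $v$, and then produce an $F$-invariant graph over each resulting petal. First I would normalize. After a linear change of coordinates we may assume $v=(1,0)$; writing $F_k=(P_k,Q_k)$ with $P_k,Q_k$ homogeneous of degree $k$, the condition $F_k(v)=\lambda v$ with $\lambda\neq 0$ says exactly $P_k(1,0)=\lambda\neq 0$ and $Q_k(1,0)=0$, so $Q_k(1,u)=u\,R_k(u)$ for a polynomial $R_k$. Rescaling the first coordinate by a suitable $(k-1)$-st root of $-1/\lambda$, we arrange that the first component of $F$ reads $z-z^{k}+(\text{h.o.t.})$. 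Then I would pass to the chart $(z,u)\mapsto(z,zu)$ of the blow-up of the origin, in which $[v]$ becomes the point $u=0$ on the exceptional divisor $\{z=0\}$. Using $Q_j(z,zu)=z^{j}Q_j(1,u)$ together with the conditions above, one computes that $F$ lifts to
\begin{equation*}
\widetilde F(z,u)=\bigl(\,z-z^{k}\bigl(1+O(z,u)\bigr),\ \ u\bigl(1+a\,z^{k-1}+O(z^{k},z^{k-1}u)\bigr)+z^{k}b+O(z^{k+1})\,\bigr),
\end{equation*}
so that $\{z=0\}$ is invariant and the transverse multiplier along any orbit equals $1+O(z^{k-1})$ — close to $1$, which is precisely what non-degeneracy buys us (a degenerate direction would not even yield the petals below).

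Next, the petals. For any holomorphic $g$ with $g(z)\to 0$ as $z\to 0$, substituting $u=g(z)$ turns the first component into a one–variable germ $z\mapsto z-z^{k}(1+o(1))$. By the classical Leau--Fatou flower theorem — conveniently expressed through the Fatou coordinate $\zeta=1/\bigl((k-1)z^{k-1}\bigr)$, which conjugates the model $z\mapsto z-z^{k}$ to $\zeta\mapsto\zeta+1$ — there are $k-1$ attracting petals $\mathcal P_1,\dots,\mathcal P_{k-1}$, one around each direction with $z^{k-1}\in\mathbb R_{>0}$; each $\mathcal P_j$ is biholomorphic to $\mathbb D$ with $0\in\partial\mathcal P_j$, is mapped into itself, and satisfies $|z_n|\asymp n^{-1/(k-1)}$, hence $\sum_n|z_n|^{k}<\infty$. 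These petals, and that estimate, persist uniformly under the perturbation produced by any $g$ in the class used below.

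Now fix a petal $\mathcal P=\mathcal P_j$ and look for a bounded holomorphic $g:\mathcal P\to\mathbb C$ with $g(z)=O(z)$ whose graph $z\mapsto(z,g(z))$ is $\widetilde F$-invariant; along a forward orbit the values satisfy $u_{n+1}=u_n\widehat m(z_n,u_n)+I(z_n)$, where $\widehat m=1+O(z^{k-1})$ and $I(z)=O(z^{k})$. Telescoping and using that a point on the curve must tend to the origin produces the candidate
\begin{equation*}
g(z_0)=-\sum_{n\ge 0}\frac{I(z_n)}{\prod_{i=0}^{n}\widehat m(z_i,u_i)},
\end{equation*}
and one makes this holomorphic and self-consistent by a fixed-point argument in a Banach space of holomorphic functions on $\mathcal P$ with a $|z|$-weighted sup norm. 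I expect this step to be the main obstacle: the transverse direction need not contract, so there is no contraction "for free''. Along an orbit $\widehat m(z_n,u_n)\approx 1+\tfrac{a}{(k-1)n}$, so $\prod_{i\le n}\widehat m$ telescopes to size $\asymp n^{\,\mathrm{Re}(a)/(k-1)}$ — only polynomial in $n$ (never exponential, again by non-degeneracy). Weighed against $|I(z_n)|=O(|z_n|^{k})=O(n^{-k/(k-1)})$, the series and the iteration converge whenever $\mathrm{Re}(a)$ is not too negative; in the complementary regime the transverse direction is strongly attracting, the whole slab $\{z\in\mathcal P,\ |u|<\delta\}$ is attracted to $0$ tangentially to $v$, and the invariant curve is recovered by the symmetric fixed-point argument. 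Either way one obtains the desired $g$ on $\mathcal P$ with $g(z)/z\to 0$.

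Finally, composing $z\mapsto(z,g(z))$ with the chart map $(z,u)\mapsto(z,zu)$, undoing the linear normalization, and precomposing with a Riemann map $\mathbb D\to\mathcal P$ sending $1$ to $0$, we obtain an injective holomorphic $\varphi:\mathbb D\to\mathbb C^2\setminus\{0\}$ that is continuous at $1$ with $\varphi(1)=0$, whose image is $F$-invariant with $\bigl(F|_{\varphi(\mathbb D)}\bigr)^{n}\to 0$, and with $[\varphi(\zeta)]\to[v]$ because $g(z)/z\to 0$. Thus each of the $k-1$ petals yields a parabolic curve tangent to $[v]$, which gives the asserted $k-1$ curves (the count is "at least'' because other methods may produce more). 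The remaining points — uniformity of all estimates over the petal class, and continuity of $\varphi$ at $1\in\partial\mathbb D$ — are routine from the Fatou-coordinate description.
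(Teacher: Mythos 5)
This statement is quoted in the paper from Hakim \cite{Hakim1998} and not proved there, so the comparison is with the standard proof. Your reduction --- normalizing $v=(1,0)$, rescaling so the first component reads $z-z^k+\cdots$, blowing up via $(z,u)\mapsto(z,zu)$, producing the $k-1$ Leau--Fatou petals for the $z$-dynamics, and seeking an $F$-invariant graph $u=g(z)$ over each petal via the telescoped series --- is exactly Hakim's strategy, and your estimates are correct in the regime where that series converges. The gap is the complementary regime, which you dismiss in one sentence. With your normalization the transverse multiplier along an orbit is $1+a\,z_n^{k-1}+\cdots\approx 1+\tfrac{a}{(k-1)n}$, and your series converges only when $\mathrm{Re}(a)>-1$; Hakim's theorem carries no hypothesis on the director $a$, so the case $\mathrm{Re}(a)\le -1$ must be proved, and the ``symmetric fixed-point argument'' you appeal to does not exist as stated: the petal is forward- but not backward-invariant (in the Fatou coordinate it is a right half-plane on which $F^{-1}$ acts as $\zeta\mapsto\zeta-1$), so there is no backward graph transform on a fixed domain; and the observation that the whole slab $\{z\in\mathcal{P},\,|u|<\delta\}$ is attracted to $0$ tangentially to $v$ is a two-dimensional statement that by itself produces no $F$-invariant holomorphic disk.

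Worse, your functional-analytic framework itself breaks down there. Already in the linear model $u_1=u(1+a z^{k-1})+bz^k$, the invariance equation forces $c_1(1+a)=-b$ at lowest order, so at the resonant values $a\in\{-1,-2,\dots\}$ (which lie precisely in your problematic regime) there is no invariant graph with $g(z)=O(z)$ at all; the actual invariant graph on the petal behaves like $z\log(1/z)$, hence lies outside any $|z|$-weighted sup-norm space of the kind you propose, even though it still gives a parabolic curve since $g(z)\to 0$ suffices for tangency to $[v]$. A complete argument has to treat all directors: for instance, for non-resonant $a$ one first applies finitely many changes $u\mapsto u+c z^j$ (possible exactly when $a\neq -j$) to raise the order of the inhomogeneous term to $O(z^N)$ with $N$ large, after which your forward sum converges for that fixed $a$; at a resonance one must solve the linear difference equation $u(\zeta+1)=m(\zeta)u(\zeta)+I(\zeta)$ on the half-plane directly and accept the logarithmic correction. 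As written, the hard half of Hakim's theorem is asserted rather than proved.
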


The layout of the paper is as follows. In Section (2) we construct a holomorphic endomorphism of $\mathbb{P}^2$ with an invariant Fatou component $\Omega$ where the limit set is a punctured disk in the boundary of $\Omega$. In Section (3) we construct a large class of holomorphic and polynomial maps for which there exist non-recurrent Fatou components with limit sets equal to the regular parts of analytic sets.

\begin{remark}
The endomorphism of $\mathbb{P}^2$ that we construct in Section (2) is a special case of a one-resonant biholomorphism, which were studied by Bracci and Zaitsev in \cite{BZ2013}. We note that our example is not parabolically attracting, and their main result does not hold for our construction. The maps we study in Section (2) are examples of maps tangent to the identity on one-dimensional analytic subset. Such maps were studied in great detail in \cite{BM2003} and \cite{ABT2004}. 
\end{remark}

{\bf Acknowledgement.}
The third author was supported by a SP3-People Marie Curie Actionsgrant in the project Complex Dynamics (FP7-PEOPLE-2009-RG, 248443).

\section{Construction of a punctured disk}

Throughout this section we let $f$ be the polynomial endomorphism of $\mathbb{C}^2$ given by
\begin{equation*}
f(z,w) = (\lambda z + z^3, \lambda^{-1} (w + zw^2) + w^3).
\end{equation*}
Here $\lambda = e^{2\pi i \theta}$, where $\theta \in \mathbb{R} \setminus \mathbb{Q}$ is chosen such that the maps $z \mapsto \lambda z + z^3$ and $w \mapsto \lambda^{-1}w + w^3$ are linearizable in a neighborhood of the origin. Observe that the polynomial map $f$ extends to a holomorphic endomorphism of $\mathbb{P}^2$, given in homogeneous coordinates by
\begin{equation*}
F[Z:W:T] = [\lambda ZT^2 + Z^3: \lambda^{-1} (WT^2 + ZW^2) + W^3: T^3]
\end{equation*}
Our main result is the following.

\begin{theorem}\label{main}
The map $F$ has an invariant Fatou component $\Omega$ on which all orbits converge to an embedded punctured disk $D^\star \subset \partial \Omega$, and $F$ acts on $D^\star$ as an irrational rotation.
\end{theorem}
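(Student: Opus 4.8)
The plan is to use that $f$ differs from the one-resonant model $f_0(z,w)=(\lambda z,\ \lambda^{-1}w(1+zw))$ only by the non-resonant terms $(z^3,w^3)$, and that for $f_0$ the single function $\eta:=zw$ evolves exactly parabolically, $\eta\mapsto\eta(1+\eta)$. \emph{Step 1 (linearizing the tangential dynamics; isolating the parabolic variable).} The map $f$ preserves $\{w=0\}$, on which it restricts to $z\mapsto\lambda z+z^3$; by the choice of $\lambda$ this germ is linearizable, so it carries a Siegel disk $\Delta$ with a normalized linearizer $\psi:\Delta\to\mathbb{D}_r$ satisfying $\psi(\lambda z+z^3)=\lambda\psi(z)$. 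The strip $\Delta\times\mathbb{C}$ is forward invariant, and in the coordinates $(\xi,w)=(\psi(z),w)$ the map reads
\[
(\xi,w)\longmapsto\bigl(\lambda\xi,\ \lambda^{-1}w(1+\psi^{-1}(\xi)\,w+\lambda w^2)\bigr).
\]
A short computation gives, for $\eta:=\xi w$, the rule $\eta\mapsto\eta\bigl(1+c(\xi)\,\eta+\lambda\eta^2/\xi^2\bigr)$, where $c(\xi):=\psi^{-1}(\xi)/\xi$ is holomorphic and non-vanishing on $\mathbb{D}_r$ with $c(0)=1$. On a cone $\{|w|<\varepsilon|\xi|\}$ the term $\lambda\eta^2/\xi^2$ is negligible, so along an orbit, on which $\xi_n=\lambda^n\xi_0$ merely rotates, $\eta$ obeys the non-autonomous parabolic recursion $\eta_{n+1}=\eta_n+c(\xi_n)\eta_n^2+(\mathrm{h.o.t.})$.

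\emph{Step 2 (the invariant petal and the Fatou component).} Passing to the Fatou-type coordinate $u:=-1/(c(\xi)\eta)$ turns this into $u_{n+1}=\dfrac{c(\xi_n)}{c(\xi_{n+1})}\bigl(u_n+1+(\mathrm{h.o.t.})\bigr)$, and the substitution $v_n:=\dfrac{c(\xi_n)}{c(\xi_0)}u_n$ removes the oscillating factor, leaving $v_{n+1}=v_n+\dfrac{c(\xi_n)}{c(\xi_0)}+(\mathrm{h.o.t.})$. Since the rotation is uniquely ergodic and $c$ is holomorphic, $\tfrac1N\sum_{n<N}c(\xi_n)\to c(0)=1$ by the mean value property, so $\operatorname{Re}v_n$ grows linearly; hence $u_n\to\infty$, $\eta_n\to0$, and $w_n=\eta_n/\xi_n\to0$ while $|\xi_n|=|\xi_0|$ stays constant. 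With $r''>0$, $\varepsilon>0$ small and $M$ large, one verifies that a suitably shaped petal region
\[
\Omega_0=\bigl\{(\xi,w):0<|\xi|<r'',\ |w|<\varepsilon|\xi|,\ \operatorname{Re}\bigl(-1/(c(\xi)\xi w)\bigr)>M\bigr\}
\]
(trimmed slightly to absorb the bounded drift of $\arg c$ along the rotation) is forward invariant, that $\{f^n\}$ is normal on it, and that every orbit in $\Omega_0$ converges to the circle $\psi^{-1}(\{|\xi|=|\xi_0|\})\times\{0\}$ in the $z$-axis. Thus $\Omega_0$ lies in a Fatou component $\Omega$.

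\emph{Step 3 (identifying $\Omega$ and its limit set).} Projecting to the first coordinate, $\operatorname{pr}_1(\Omega)$ is a connected subset of the Fatou set of $z\mapsto\lambda z+z^3$ that meets $\Delta$, hence is contained in $\Delta$; moreover $\Omega$ cannot meet $\{z=0\}$, since there the points with $w$ outside the Siegel disk of $w\mapsto\lambda^{-1}w+w^3$ are not Fatou points, while those with $w$ inside it lie in another component — otherwise $h(\Omega)$ would be a disjoint union of pieces of the two axes, contradicting the connectedness of $h(\Omega)$ provided by Theorem \ref{smoothness}. Hence $\Omega\subseteq(\Delta\setminus\{0\})\times\mathbb{C}$. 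Along a subsequence with $\lambda^{n_j}\to1$ the limit map $h$ equals $(z,0)$ on $\Omega_0$, hence on all of $\Omega$ by the identity theorem, so $h(\Omega)$ lies in the $z$-axis and equals $\operatorname{pr}_1(\Omega)\times\{0\}$. Now $z\mapsto\lambda z+z^3$ restricts to a holomorphic self-map of the hyperbolic domain $\operatorname{pr}_1(\Omega)$ whose $n_j$-th iterates tend to the identity; such a self-map must be an automorphism generating a non-discrete subgroup of the automorphism group, so $\operatorname{pr}_1(\Omega)$ is biholomorphic to a disk, an annulus, or a punctured disk, and as it contains a full punctured neighborhood of $0\in\partial(\operatorname{pr}_1\Omega)$ (inherited from $\Omega_0$) it is a punctured disk. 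Thus $D^\star:=\operatorname{pr}_1(\Omega)\times\{0\}$ is an embedded punctured disk, $F$ acts on it as the irrational rotation $z\mapsto\lambda z+z^3$, and $h(\Omega)=D^\star$. Finally $\Omega$ is non-recurrent: the same parabolic analysis produces at every point of the $z$-axis in $\Delta\setminus\{0\}$ a repelling petal alongside the attracting one, so no such point is an equicontinuity point of $\{f^n\}$; since every orbit in $\Omega$ accumulates on a circle of such points, no orbit accumulates inside $\Omega$, and Theorem \ref{thm:recurrent} then leaves only the non-recurrent case. Consequently $D^\star=h(\Omega)\subseteq\partial\Omega$ and all orbits in $\Omega$ converge to $D^\star$, as claimed.

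\emph{Main obstacle.} The heart of the matter is Step 2: promoting the heuristic ``$\eta$ is parabolic'' to an honestly forward-invariant region on which $\eta_n\to0$ for every $n$, with the slowly varying leading coefficient $c(\xi_n)$ kept under control. Agreeably this requires no Diophantine condition — only unique ergodicity of the rotation, the mean value property, and the telescoping of $\prod_n c(\xi_n)/c(\xi_{n+1})$ — but the petal must be shaped so that the bounded drift of $\arg c$ does not destroy invariance. A secondary point calling for care is Step 3: checking that $\Omega$ neither spills out of $\Delta\times\mathbb{C}$ nor has a limit set larger than a punctured disk, and that the repelling petal is available at all the relevant points of the $z$-axis.
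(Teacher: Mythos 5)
Your Steps 1--2 are, in substance, the paper's construction in different clothes: where you pass to the resonant variable $\eta=\xi w$ and a Fatou coordinate with the correction $v_n=(c(\xi_n)/c(\xi_0))u_n$, the paper conjugates by $\varphi_n(z,w)=(z,\lambda^n w)$ so that the $w$-recursion becomes $w\mapsto w+\alpha_{n-1}w^2+\lambda^{-2n+3}w^3$ with $\alpha_n$ pinned near $z_0$ by the linearization of $z\mapsto\lambda z+z^3$, and then shows $|u_n|\to\infty$ for $1/w$ in a half-plane $\mathbb{H}_{z_0}$. Two remarks on your version: the appeal to unique ergodicity and the mean value property is unnecessary, since $c(\xi)\to1$ as $\xi\to0$ already forces $\mathrm{Re}\bigl(c(\xi_n)/c(\xi_0)\bigr)>\tfrac12$ for $r''$ small, which is all the linear growth of $\mathrm{Re}\,v_n$ requires; and literal forward invariance of your $\Omega_0$ is delicate (the multiplicative drift can push points with large $|\mathrm{Im}\,u|$ out of the region) but also not needed --- as in the paper it suffices that every orbit starting in $\Omega_0$ converges to the axis and that the iterates are locally bounded there, and then to work with the Fatou component containing $\Omega_0$ (or with the union of forward images of $\Omega_0$).

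The genuine gaps are in Step 3. First, the assertion that points $(0,w)$ with $w$ outside the Siegel disk of $w\mapsto\lambda^{-1}w+w^3$ ``are not Fatou points'' is false: for $|w|$ large such points, and all nearby points, are attracted to $[0:1:0]$, hence lie in the Fatou set of $F$ on $\mathbb{P}^2$. What you actually need is only that the specific component $\Omega$ misses $\{z=0\}$, and this follows from the identity-theorem step you already have: every limit map of $(f^{n})$ on $\Omega$ has vanishing second coordinate, whereas the orbit of a point $(0,w_0)\in\{z=0\}$ either stays on an invariant circle of the Siegel disk of the $w$-map (second coordinate bounded away from $0$), escapes to the line at infinity, or lies in the Julia set of the restriction; each case is incompatible with $(0,w_0)\in\Omega$. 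Second, the non-recurrence argument ``repelling petal at each axis point, hence no equicontinuity there'' is asserted, not proved: the existence of a repelling direction transverse to an invariant circle does not by itself contradict normality --- you would need the quantitative statement that arbitrarily close to $(z_0,0)$ there are points whose orbits reach a definite distance from $\{w=0\}$ while $f^n(z_0,0)$ stays on the axis. This can be carried out, but the paper's route is cleaner and uses only what you have already established: $h(\Omega)$ is a one-dimensional invariant set containing a punctured neighborhood of $0$ in the $z$-axis but not $0$, hence equivalent to neither a disk nor an annulus; recurrence is then impossible by Theorems \ref{thm:recurrent} and \ref{thm:Ueda} (case (1) would give a point, case (3) a rank-two limit, and in case (2) the punctured disk is excluded), and Theorem \ref{thm:MishaHan} (or your argument via non-discrete automorphism groups) identifies $h(\Omega)=D^\star$ as an embedded punctured disk in $\partial\Omega$ on which $F$ acts as an irrational rotation. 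With those two repairs your outline does prove Theorem \ref{main}.
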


The set $D$ will actually be a bounded simply connected subset of the $z$-axis, hence it will be sufficient to only consider the map $f$ and work in Euclidean coordinates. We write $(z_n, w_n)=f^n(z_0, w_0)$ and define
\begin{equation*}
\varphi_n(z,w) = (z, \lambda^n w),
\end{equation*}
and
\begin{align*}
G_n & = \varphi_n \circ f \circ \varphi_{n-1}^{-1}\\
 & = (\lambda z+ z^3, w + \lambda^{1-n}  zw^2 + \lambda^{-2n+3} w^3).
\end{align*}
We also write
\begin{equation*}
g_n(w) = w + \alpha_{n-1} w^2 + \lambda^{-2n+3} w^3,
\end{equation*}
where $\alpha_n = \lambda^{-n}z_n$. Since we have chosen $\lambda$ such that the map $z \mapsto \lambda z + z^3$ is linearizable, there exists a local change of coordinates $\eta(z)=z+z^2h(z)$, conjugating our map $z \mapsto \lambda z + z^3$ to the linear $\zeta \mapsto \lambda \zeta$. Let $A>0$ be such that $|h(z)|<A$ for all $z$ sufficiently close to the origin, so that we obtain
\begin{equation*}
|\lambda^n \eta(z)-  \eta (\lambda^{n}z)|=|z^2 h(z)-z^2\lambda^nh(\lambda^n z)|<2A|z|^2.
\end{equation*}
Since we can also bound $|(\eta^{-1})'(z)|< B$  for $z$ sufficiently close to $0$, it follows that
\begin{equation*}
|\alpha_n -z_0|  =| \lambda^{-n} \eta^{-1}( \lambda^n \eta(z_0))-z_0| =| \eta^{-1}( \lambda^n \eta(z_0))-  \eta^{-1}(  \eta (\lambda^{n}z_0))|\leq C | z_0|^2
\end{equation*}
where $C>0$ does not depend on $n$ and $z_0$. Hence for $z_0$ in a sufficiently small disk $D \subset \mathbb{C}_z$ centered at the origin we have that
\begin{equation}\label{eq:smallball}
|\alpha_n - z_0| \le \frac{|z_0|}{2}
\end{equation}
holds for all $n \in \mathbb{N}$.

\begin{lemma}
For all $z_0 \in D \setminus \{0\}$ there exists an open set $\mathcal{C}_{z_0} \subset \{z = z_0\}$ on which
\begin{equation*}
\|f^n(z_0,w) - f^n(z_0, 0)\| \rightarrow 0,
\end{equation*}
uniformly on compact subsets of $\mathcal{C}_{z_0}$.
\end{lemma}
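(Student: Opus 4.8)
The lemma says that, fixing $z=z_0$, the dynamics on the fiber $\{z=z_0\}$ near $w=0$ has a forward-invariant open region on which orbits converge to the $w=0$ orbit, i.e. the parabolic fixed point $w=0$ of the fiber map "attracts" a neighborhood in a suitable sense.  The natural plan is to work with the conjugated maps $G_n$ (equivalently the one–variable maps $g_n(w)= w+\alpha_{n-1}w^2+\lambda^{-2n+3}w^3$), because the conjugation by $\varphi_n$ only rotates the $w$–coordinate and hence does not affect whether $\|f^n(z_0,w)-f^n(z_0,0)\|\to 0$.  Along the fiber $\{z=z_0\}$ the $z$–coordinate of the orbit is irrelevant to the estimate (it is the same for both orbits), so everything reduces to controlling the non-autonomous iteration $w_n = g_n(w_{n-1})$, $w_n=g_n\circ g_{n-1}\circ\cdots\circ g_1(w)$, near the parabolic point $0$, where by \eqref{eq:smallball} we have the crucial uniform bound $|\alpha_n - z_0|\le |z_0|/2$ for all $n$, so $\alpha_n$ stays in a fixed disk around $z_0$ bounded away from $0$ and from $2z_0$.

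**Key steps.**  First I would choose a "good" petal: since each $g_n$ is, to leading order, $w\mapsto w+\alpha_{n-1}w^2$ with $\alpha_{n-1}$ uniformly close to $z_0\neq 0$, the attracting directions all point (uniformly) opposite to $1/z_0$.  So I would pick a petal-like region $\mathcal P = \{\, w : \operatorname{Re}(1/(z_0 w)) > M\,\}$ for a large constant $M$ (equivalently a disk internally tangent to $0$, pointing in the attracting direction), intersected with a small ball, and define $\mathcal C_{z_0}$ to be (the interior of) this set.  Second, the standard petal computation: writing $u = 1/(z_0 w)$ one checks that $g_n$ in the $u$-coordinate acts approximately as $u\mapsto u+1+O(1/|u|)$ uniformly in $n$ (here the $O$–term is uniform precisely because $|\alpha_{n-1}-z_0|\le|z_0|/2$ and the cubic coefficient $\lambda^{-2n+3}$ has modulus $1$, hence is bounded); this shows $\mathcal P$ is forward invariant under every $g_n$ and that $|u_n| \gtrsim n$, hence $|w_n|\lesssim 1/n\to 0$, uniformly on compact subsets of $\mathcal P$.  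Third, to get not just $w_n\to 0$ but $\|f^n(z_0,w)-f^n(z_0,0)\|\to 0$, I note that the $z$–components agree exactly, and the $w$–components are $\lambda^{-n}w_n$ and $\lambda^{-n}\cdot 0 = 0$; since $|\lambda|=1$ this difference is exactly $|w_n|\to 0$.  (Alternatively, and more robustly if one wants to compare two nonzero orbits in $\mathcal C_{z_0}$, one estimates $|u_n - u_n'|$ and shows it stays bounded while $|u_n|,|u_n'|\to\infty$, so the corresponding $w$'s both go to $0$ — but comparison with the fixed orbit $w=0$ makes even this unnecessary.)

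**Main obstacle.**  The only real subtlety is making all estimates \emph{uniform in $n$}: the maps $g_n$ change with $n$, so one is iterating a non-autonomous system and cannot simply quote the classical Leau–Fatou flower theorem for a single germ.  The bound \eqref{eq:smallball}, $|\alpha_n - z_0|\le |z_0|/2$, is exactly what rescues this — it confines the quadratic coefficients to a compact set avoiding $0$, so the petal in the $u$-coordinate can be chosen once and for all (large $M$ depending only on $z_0$ and on the uniform bounds) and is invariant under every $g_n$ simultaneously.  The cubic term contributes $\lambda^{-2n+3}w^3$, whose coefficient has modulus $1$ and is therefore harmless inside a small ball.  So the plan is: (i) reduce to the conjugated fiber maps $g_n$; (ii) change to the coordinate $u=1/(z_0w)$; (iii) using \eqref{eq:smallball}, show a fixed half-plane petal $\{\operatorname{Re} u > M\}$ is invariant under all $g_n$ with $\operatorname{Re} u_n \ge \operatorname{Re} u + cn$ for a uniform $c>0$; (iv) conclude $|w_n|=O(1/n)$ uniformly on compacts, and translate back through $\varphi_n$ (which is an isometry in $w$) to obtain the claimed uniform convergence, setting $\mathcal C_{z_0}$ equal to the image of the chosen petal.
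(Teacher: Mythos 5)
Your proposal follows essentially the same route as the paper: reduce via $\varphi_n$ to the non-autonomous fiber maps $g_n$, pass to the reciprocal coordinate, and use the uniform bound \eqref{eq:smallball} to get a single half-plane petal invariant under all $g_n$ on which the orbit drifts to infinity linearly, exactly the paper's $\mathbb{H}_{z_0}$ argument. The only slip is the orientation of your petal: since $u=1/(z_0w)$ transforms $g_n$ into approximately $u\mapsto u-\alpha_{n-1}/z_0\approx u-1$ (not $u+1$), the invariant region is $\{\mathrm{Re}(1/(z_0w))<-M\}$, matching the paper's $\{\mathrm{Re}(u\bar z_0)<-K(z_0)\}$; with that sign corrected your argument coincides with the paper's proof.
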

\begin{proof}
Notice that $f^n = \varphi_n^{-1} \circ G_n \circ \ldots G_1$. Since we are interested in the set of $w$-values for which $w_n$ converges to $0$, and the map $\varphi_n$ preserves distances to $(z_n,0)$, it is equivalent to consider the $w$-values for which
\begin{equation*}
\pi_2 (G_n \circ \ldots \circ G_1(z_0,w)) = g_n \circ \ldots \circ g_1(w)
\end{equation*}
converges to $0$. We will write $g(n)$ for the composition $g_n \circ \ldots \circ g_1$. Let us also write
\begin{equation*}
u_n = \frac{1}{g(n)(w)}.
\end{equation*}
We have that
\begin{equation*}
u_{n+1} = \frac{1}{g_{n+1}(\frac{1}{u_n})} = u_n - \alpha_n + O(\frac{1}{|u_n|}).
\end{equation*}
Having chosen $z_0$ sufficiently close to the origin so that Equation \eqref{eq:smallball} holds, it follows that $|u_n| \rightarrow \infty$ if the initial value $u_0$ lies in a halfplane of the form
\begin{equation}\label{halfplane}
\mathbb{H}_{z_0} = \{u \in \mathbb{C} \mid \mathrm{Re}(u \bar{z_0}) < - K(z_0)\}.
\end{equation}
Hence for all $u\in \mathbb{H}_{z_0}$ we have
\begin{equation*}
\|f^{n}(z_0,u^{-1})-f^n (z_0,0)\| \rightarrow 0.
\end{equation*}
The statement of the lemma therefore holds for the set
\begin{equation*}
\mathcal{C}_{z_0} = \left\{(z_0, w) \mid \; \frac{1}{w} \in \mathbb{H}_{z_0} \right\}.
\end{equation*}
\end{proof}

In Equation \eqref{halfplane} the constant $K(z)$ can be chosen to vary continiously with $z$ in the punctured disk $D \setminus \{0\}$, and therefore the sets $\mathcal{C}_{z_0}$ also vary continuously with $z \in D \setminus \{0\}$. Let $U \subset D$ be an $f$-invariant neighborhood of the origin and write
\begin{equation*}
V= \{(z_0, w_0) \mid z_0 \in U \setminus \{0\}, \; w_0 \in C_{z_0} \}.
\end{equation*}
We then define the $f$-invariant open connected set $\Lambda$ by
\begin{equation*}
\Lambda=\bigcup_{n=0}^{\infty}f^n(V).
\end{equation*}

Observe that $\{f^n\}_n$ is normal familly on $\Lambda$, hence $\Lambda$ is contained in some invariant Fatou component $\Omega$.

\begin{lemma}
Every orbit in $\Omega$ converges to the plane $\mathbb{C}_z$.
\end{lemma}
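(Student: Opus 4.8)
The plan is to combine the previous lemma with the normality of $\{f^n\}$ on $\Omega$ and the identity principle for analytic sets. Let $L \subset \mathbb{P}^2$ denote the closure of the affine plane $\mathbb{C}_z$, that is, the projective line $\{W=0\}$; it is $F$-invariant. We must show that for every $p \in \Omega$ the orbit $\{F^n(p)\}$ accumulates only on $L$, equivalently $\mathrm{dist}_{\mathbb{P}^2}(F^n(p), L) \to 0$.

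First I would observe that every orbit starting in $\Lambda$ converges to $L$. A point of $\Lambda$ has the form $f^m(z_0,w_0)$ with $(z_0,w_0) \in V$, so $z_0 \in U \setminus\{0\}$ and $w_0 \in \mathcal{C}_{z_0}$; its forward orbit is a tail of the orbit of $(z_0,w_0)$. By the identity $f^n = \varphi_n^{-1} \circ G_n \circ \cdots \circ G_1$ used in the proof of the previous lemma, the $w$-coordinate of $f^n(z_0,w_0)$ equals $\lambda^{-n} g(n)(w_0) = \lambda^{-n}/u_n$, which tends to $0$ since $|u_n| \to \infty$ for $w_0 \in \mathcal{C}_{z_0}$. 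Hence the $w$-coordinate along every orbit in $\Lambda$ tends to $0$, so these orbits (which stay in $\mathbb{C}^2$, as $\Lambda \subset \mathbb{C}^2$ is forward invariant) converge to $L$.

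Next, let $h \colon \Omega \to \mathbb{P}^2$ be a uniform-on-compacts limit of a subsequence $F^{n_j}$; such subsequences exist because $\{F^n\}$ is normal on the Fatou component $\Omega$. For $p \in \Lambda$ we have $F^{n_j}(p) \to h(p)$ and $\mathrm{dist}_{\mathbb{P}^2}(F^{n_j}(p), L) \to 0$ by the previous paragraph, so $h(p) \in L$; thus the closed analytic subset $h^{-1}(L)$ of the connected manifold $\Omega$ contains the nonempty open set $\Lambda$, and therefore $h^{-1}(L) = \Omega$ (a proper analytic subset would be nowhere dense), i.e. $h(\Omega) \subseteq L$. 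This holds for every subsequential limit $h$.

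Finally I would upgrade this to convergence of the full orbit. Fix $p \in \Omega$ and suppose $\mathrm{dist}_{\mathbb{P}^2}(F^n(p), L) \not\to 0$: then there are $\varepsilon>0$ and $n_k \to \infty$ with $\mathrm{dist}_{\mathbb{P}^2}(F^{n_k}(p), L) \ge \varepsilon$, and after extracting by normality a subsequence along which $F^{n_k} \to h$ uniformly on compacts, with $h(\Omega)\subseteq L$ by the previous paragraph, we would get $\mathrm{dist}_{\mathbb{P}^2}(h(p), L) \ge \varepsilon$, a contradiction. Hence $\mathrm{dist}_{\mathbb{P}^2}(F^n(p),L) \to 0$ for all $p \in \Omega$, which is the assertion. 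The one step that is not mere bookkeeping is the passage from $\Lambda$ to all of $\Omega$; this is exactly where the identity principle enters, using that $L$ is analytic and that $\Lambda$ has nonempty interior in $\Omega$.
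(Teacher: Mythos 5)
Your proof is correct and follows essentially the same route as the paper: orbits in $\Lambda$ have $w$-coordinate $\lambda^{-n}g(n)(w_0)\to 0$, so any subsequential limit $h$ maps $\Lambda$ into $\{w=0\}$, and since $\Lambda$ is open and $\Omega$ connected the identity principle gives $h(\Omega)\subset\{w=0\}$. The only difference is that you also spell out the final compactness/normality step upgrading "all subsequential limits land in the plane" to "every orbit converges to the plane," which the paper leaves implicit.
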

\begin{proof}
Let $(f^{n_j})$ be a sequence that converges uniformly on compact subsets of $\Omega$ to a map $h$. Then $h(\Lambda) \subset \{w = 0\}$. Since $h$ is holomorphic and $\Lambda$ has interior, it follows that $h(\Omega) \subset \{w = 0\}$.
\end{proof}

\begin{lemma}
Let $(f^{n_j})$ be a convergent subsequence with limit $h: \Omega \rightarrow \partial \Omega$. Then $h(\Omega)$ is contained in the Siegel disk in the plane $\mathbb{C}_z$, and is independent of the sequence $(n_j)$.
\end{lemma}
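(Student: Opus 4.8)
The plan is to observe that the first coordinate $\psi:=\pi_1\circ h$ of $h$ factors through the projection $\pi_1(z,w)=z$, to use the Siegel linearization of $p(z):=\lambda z+z^{3}$ to place $\psi(\Omega)$ inside the Siegel disk, and finally to deduce the independence from an elementary statement about rotation-invariant subsets of $\mathbb{D}$. Passing to a subsequence we may assume $\lambda^{n_j}\to\mu\in\partial\mathbb{D}$, which does not alter $h$. From $\pi_1\circ f=p\circ\pi_1$ we get $\pi_1\circ f^{n}=p^{n}\circ\pi_1$, and since (by the previous lemma) $h(\Omega)\subseteq\{w=0\}\subseteq\mathbb{C}^{2}$, passing to first coordinates in $f^{n_j}\to h$ gives
\[
\psi=\lim_j\,p^{n_j}\circ\pi_1
\]
locally uniformly on $\Omega$. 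Thus $h=(\psi,0)$, and we identify $h(\Omega)$ with $\psi(\Omega)\subseteq\mathbb{C}_z$; it remains to show $\psi(\Omega)$ lies in the Siegel disk $\Delta\subseteq\mathbb{C}_z$ of $p$ at the origin.

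First I would prove $\psi(\Omega)\subseteq\Delta$. As $p^{n_j}(z)$ is independent of $w$ while $p^{n_j}(z)\to\psi(z,w)$ whenever $(z,w)\in\Omega$, the limit depends on $z$ alone: $\psi=\tilde\psi\circ\pi_1$, where $\tilde\psi:=\lim_j p^{n_j}$ on $\pi_1(\Omega)$. In particular every point of $\pi_1(\Omega)$ has bounded forward $p$-orbit, so $\pi_1(\Omega)$ lies in the filled Julia set of $p$; as $\pi_1(\Omega)$ is open (a coordinate projection is an open map) it lies in the interior of that set, and being connected it is contained in a single bounded Fatou component of $p$. Since $\pi_1(\Omega)\supseteq\pi_1(V)=U\setminus\{0\}$, which lies in $\Delta$, that component is $\Delta$. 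Choosing a linearizer $\sigma\colon\Delta\to\mathbb{D}$ with $\sigma\circ p=\lambda\,\sigma$, we obtain on $\pi_1(\Omega)$ that $p^{n_j}=\sigma^{-1}\bigl(\lambda^{n_j}\sigma(\cdot)\bigr)\to\sigma^{-1}\bigl(\mu\,\sigma(\cdot)\bigr)=:R_\mu$, so $\tilde\psi=R_\mu|_{\pi_1(\Omega)}$ and $h(\Omega)=\psi(\Omega)=R_\mu\bigl(\pi_1(\Omega)\bigr)\subseteq\Delta$.

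For the independence, the point is that $\pi_1(\Omega)$ in this last identity does not depend on the chosen subsequence, so it suffices to show $R_\mu\bigl(\pi_1(\Omega)\bigr)=\pi_1(\Omega)$ for every $\mu\in\partial\mathbb{D}$, i.e.\ that $T:=\sigma\bigl(\pi_1(\Omega)\bigr)\subseteq\mathbb{D}$ is invariant under every rotation about $0$. Since $f(\Omega)\subseteq\Omega$ and $\pi_1\circ f=p\circ\pi_1$, we have $p\bigl(\pi_1(\Omega)\bigr)=\pi_1\bigl(f(\Omega)\bigr)\subseteq\pi_1(\Omega)$, i.e.\ $\lambda T\subseteq T$, hence $\lambda^{n}T\subseteq T$ for all $n\ge0$. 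Given $q\in T$, choose $\varepsilon>0$ with $B(q,\varepsilon)\subseteq T$; then $B(\lambda^{n}q,\varepsilon)=\lambda^{n}B(q,\varepsilon)\subseteq T$ for every $n\ge0$, and since $\theta$ is irrational the forward orbit $\{\lambda^{n}q:n\ge0\}$ is dense in the circle $\{|\zeta|=|q|\}$, so $T$ contains the whole annulus $\bigl\{\zeta\in\mathbb{D}:\bigl||\zeta|-|q|\bigr|<\varepsilon\bigr\}$. Therefore $T=\{\zeta\in\mathbb{D}:|\zeta|\in M\}$ with $M=\{|q|:q\in T\}$, which is rotation-invariant, so $R_\mu\bigl(\pi_1(\Omega)\bigr)=\pi_1(\Omega)$ for all $\mu$ and $h(\Omega)=\pi_1(\Omega)$ is independent of $(n_j)$.

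The crux is that, even though $\Omega$ may be much larger than $\bigcup_{n}f^{n}(V)$, the limit $\psi=\lim_j p^{n_j}\circ\pi_1$ is automatically independent of $w$ and avoids the escaping set of $p$; this forces $\pi_1(\Omega)$ inside $\Delta$, after which the Siegel linearization reduces both the identification of $h(\Omega)$ and its independence to statements about rotation-invariant open subsets of $\mathbb{D}$ — which is where the irrationality of $\theta$ enters.
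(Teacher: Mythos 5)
Your proof is correct, and it takes a genuinely different route from the paper's. The paper first invokes Lemma 13 of \cite{LP2012} (normality of $\{f^n\}$ restricted to the limit set) to place $h(\Omega)$ inside the Siegel disk, and then settles uniqueness in one line: any other limit $k$ is of the form $\rho\circ h$ with $\rho$ a limit of $f^n$ on the Siegel disk, and invariance of $\Omega$ gives $k(\Omega)=h(\Omega)$. You avoid the external lemma entirely: from the skew-product relation $\pi_1\circ f^{n}=p^{n}\circ\pi_1$ you deduce that the limit depends only on $z$, place $\pi_1(\Omega)$ in the filled Julia set of $p$ and hence (by openness, connectedness, and the fact that it contains $U\setminus\{0\}$) in the Siegel disk $\Delta$, and then use the linearizer $\sigma$ to write every limit map on $\pi_1(\Omega)$ as $R_\mu=\sigma^{-1}(\mu\,\sigma(\cdot))$. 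Your density argument showing that $\sigma(\pi_1(\Omega))$ is a rotation-invariant union of circles makes fully explicit the step the paper leaves terse (``by invariance of $\Omega$''), and it yields the sharper conclusion $h(\Omega)=\pi_1(\Omega)$. The points you use without proof --- that $h$ takes values in the affine plane $\{w=0\}$ (the preceding lemma, which the paper also relies on; note that a subsequence of $p^{n}(z)$ converging to a finite value already excludes the escaping set, so your ``bounded orbit'' claim is fine) and that $U\setminus\{0\}\subset\Delta$ (immediate since $U$ is an invariant neighborhood of $0$ in the linearization domain) --- are harmless. What the paper's route buys is brevity via the general normality lemma; what yours buys is a self-contained, essentially one-variable argument with a concrete description of the limit set.
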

\begin{proof}
Lemma 13 of \cite{LP2012} states that the restriction of the maps $\{f^n\}$ to the set $h(\Omega)$ must also form a normal family. Hence $h(\Omega)$ is contained in the Siegel disk in the $z$-plane centered at the origin, which we call $V$. Suppose that $k: \Omega \rightarrow \bar{\Omega}$ is any other limit map of the sequence $(f^n)$. By the same argument as above we have that $k(\Omega)$ lies in the Siegel disk $V$. Therefore it follows from the skew-product structure of $f$ that $k = \rho \circ h$, where $\rho: V \rightarrow V$ is a limit map of the sequence $(f^n)$ restricted to $V$. By invariance of $\Omega$ it follows that $h(\Omega) = k(\Omega)$.
\end{proof}

\begin{lemma}
The Fatou component $\Omega$ is non-recurrent and the limit set $h(\Omega)$ is a punctured disk.
\end{lemma}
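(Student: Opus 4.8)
The argument splits into two independent parts: that $\Omega$ is non-recurrent, and the identification of the limit set. Non-recurrence is quick: if an orbit $\{f^n(p)\}$ with $p\in\Omega$ had an accumulation point $q\in\Omega$, then passing to a subsequence along which $f^{n_j}$ converges (by normality) to a limit map $h$ would give $q=h(p)$, whereas the previous lemma puts $h(\Omega)$ in $\partial\Omega$; so $q\notin\Omega$, a contradiction, and no orbit accumulates in $\Omega$.

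For the limit set I would first make $h$ explicit on $\Lambda$. Let $\Delta\subset\mathbb{C}_z$ be the Siegel disk of $z\mapsto\lambda z+z^3$ and $\zeta=\eta(z)$ a linearizing coordinate, so that $\eta(\Delta)=\{|\zeta|<R\}$ and $f|_\Delta$ is the rotation $\zeta\mapsto\lambda\zeta$. Along a subsequence with $\lambda^{n_j}\to\mu\in\partial\mathbb{D}$, and for every $(z_0,w_0)\in\Lambda$, one has $f^{n_j}(z_0,w_0)\to(\eta^{-1}(\mu\,\eta(z_0)),0)$: the first coordinate because $f$ is a skew product over $z\mapsto\lambda z+z^3$, the second because the $w$-coordinate of every orbit in $\Lambda$ tends to $0$ by construction. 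Since the first coordinates of points of $\Lambda$ fill $U\setminus\{0\}$, in the coordinate $\zeta$ this identifies $h(\Lambda)$ with the punctured disk $\{0<|\zeta|<\rho\}$, where $\eta(U)=\{|\zeta|<\rho\}$; in particular $h$ has rank $1$. Theorem \ref{smoothness} then makes $h(\Omega)$ an injectively immersed Riemann surface, hence an open connected subset of the one-dimensional manifold $\Delta$, and it is $F$-invariant since $h\circ f=f\circ h$. Because $f$ acts on $\Delta$ as an irrational rotation fixing $0$, such a set is a union of the circles $\{|\zeta|=r\}$ it meets, hence a disk, a punctured disk, or an annulus centered at $0$; as it contains $h(\Lambda)$ it has points arbitrarily close to $0$, which leaves only a disk or a punctured disk.

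It remains to exclude the disk, i.e.\ to show $0\notin h(\Omega)$, and this is the delicate point. Suppose $h(p)=0$ for $p=(a,b)\in\Omega$. The first coordinates of the orbit are $a_n=g^n(a)$ with $g(z)=\lambda z+z^3$, and $a_{n_j}\to0$; but a Siegel fixed point has trivial basin --- once $g^n(a)$ entered $\Delta\setminus\{0\}$ its $\zeta$-modulus would remain constant and positive --- so $g^m(a)=0$ for some $m$. Replacing $p$ by $f^m(p)\in\Omega$, which still satisfies $h(f^m(p))=f^m(0)=0$, we may assume $a=0$; then the orbit stays on the invariant $w$-axis, where $f$ acts by $w\mapsto\lambda^{-1}w+w^3$, and the same reasoning produces a further iterate of $p$ equal to $0$. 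This is impossible, because $0\notin\Omega$: otherwise $\Omega$ would contain some $(0,w_0)$ with $w_0\neq0$ on an invariant circle of $w\mapsto\lambda^{-1}w+w^3$, whose orbit stays on that circle and hence does not converge to $\mathbb{C}_z$, contradicting the earlier lemma. Therefore $0\notin h(\Omega)$, so $h(\Omega)$ is a punctured disk; by Theorem \ref{smoothness} it is embedded, and $F$ restricts to it as the irrational rotation $\zeta\mapsto\lambda\zeta$, which together with non-recurrence also finishes Theorem \ref{main}. The main obstacle is exactly this last step --- transferring the ``trivial basin'' dichotomy along the subsequence, propagating it forward using invariance of the two one-dimensional Siegel disks, and then ruling the origin out of $\Omega$; everything else is bookkeeping with the skew-product structure.
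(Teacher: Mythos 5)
Your exclusion of the origin is fine and is in fact a more detailed version of what the paper does: the paper disposes of $(0,0)$ with the one-line appeal to the skew-product structure and the linearizability of $f|_{\{z=0\}}$, and your two-step Siegel-disk argument (first coordinate must hit $0$ exactly, then the $w$-axis dynamics forces the orbit onto $(0,0)$, which cannot lie in $\Omega$ because nearby points $(0,w_0)$ sit on invariant curves bounded away from $\{w=0\}$) is a correct way to fill that in. The explicit computation of $h$ on $\Lambda$ and the observation that the limit set is an open, connected, rotation-invariant subset of the Siegel disk accumulating at $0$ are also fine (if you argue this way, do note why forward invariance under a single irrational rotation suffices: the closed complement is backward invariant, hence saturated by circles).

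The genuine gap is in your ``quick'' non-recurrence argument, and it propagates to your use of Theorem \ref{smoothness}. You deduce non-recurrence from ``the previous lemma puts $h(\Omega)$ in $\partial\Omega$'', but that inclusion is never proved before this point: the preceding lemmas only show $h(\Omega)\subset\{w=0\}$, that $h(\Omega)$ lies in the Siegel disk of the $z$-axis, and that the limit set is independent of the subsequence (the phrase $h:\Omega\to\partial\Omega$ appears only in the statement's wording, not as a proved fact). What would have to be ruled out is precisely that part of the punctured Siegel disk $\{(z,0):0<|z|<\epsilon\}$ lies \emph{inside} $\Omega$; if it did, orbits would accumulate in $\Omega$ and the component would be recurrent, so assuming $h(\Omega)\subset\partial\Omega$ is essentially assuming the conclusion. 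A direct proof that this punctured disk is not in $\Omega$ requires controlling the repelling side of the parabolic-type dynamics transverse to the $z$-axis, which neither you nor the paper carries out. The paper's proof takes a different route exactly to avoid this: it only uses that the limit set contains the punctured disk and omits the origin, concludes that it is a one-dimensional set not conformally a disk or an annulus, and then invokes Theorem \ref{thm:recurrent} together with Theorem \ref{thm:Ueda} to exclude recurrence (in the recurrent case the limit manifold $\Sigma\subset\Omega$ would have to be a punctured disk, which Ueda forbids), after which Theorem \ref{thm:MishaHan} (using uniqueness of the limit set) yields that $h(\Omega)$ is an embedded punctured disk. Note also that Theorem \ref{smoothness}, as stated, likewise presupposes a limit map into $\partial\Omega$, so you cannot use it before the boundary inclusion (equivalently, non-recurrence) is in hand. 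To repair your argument, either prove directly that the punctured $z$-axis Siegel disk lies in the Julia set, or fall back on the classification-theorem route as the paper does.
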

\begin{proof}
It follows from the skew-product structure of $f$, and the fact that the restriction of $f$ to $\{z = 0\}$ is linearizable in a neighborhood of the origin, that no orbits in $\Omega$ converge to $(0,0)$. But $h(\Omega)$ does contain any point $(z,0)$ with $z \neq 0$ sufficiently small. Hence $h(\Omega)$ is a $1$-dimensional submanifold of $\mathbb{C}^2$ that is not equivalent to either the unit disk or to an annulus. Therefore it follows from Theorems \ref{thm:recurrent} and \ref{thm:Ueda} that $\Omega$ must be a non-recurrent Fatou component, and Theorem \ref{thm:MishaHan} implies that $h(\Omega)$ is an embedded punctured disk.
\end{proof}

With this lemma we have completed the proof of Theorem \ref{main}.

\begin{remark}
One easily sees that the Siegel disk centered at $0$ at in the $z=0$ plane lies in the Julia set. Indeed, suppose that a point $(0,w)$ lies in the Fatou set, and let $U$ be a neighborhood of $(0,w)$ on which the family $\{f^n\}$ is normal. Since $\frac{\partial f^n}{\partial z}(0,w)$ and $\frac{\partial f^n}{\partial w}(0,w)$ are bounded away from $0$, and by our assumption that the iterates $(f^n)$ form a normal family, the union of the forward images of $U$ contains a tubular neighborhood of the $\omega$-limit set of $(0,w)$, which is a Jordan curve in the $z=0$ plane whose interior contains the origin. Hence for $|c|$ sufficiently small the intersection of this tubular neighborhood with the fiber $\{z = c\}$ contains an annulus. But then the family of iterates of $f$ restricted to the area enclosed by this annulus must be bounded, and thus a normal family. But this area includes the origin, which leads to a contradiction.
\end{remark}

\section{Regular limit sets}\label{cusp}

Let $V \subset \mathbb{C}^2$ be any analytic set of pure dimension one. Observe that there exist an open cover $U_n\subset \mathbb{C}^2$ of $V$ and a collection of minimal defining functions $g_n\in\cO(U_n)$ for sets $V\cap U_n$, i.e. $\{g_n=0\}=V\cap U_n$ and $\{g_n=dg_n=0\}=\mathrm{Sing}(V\cap U_n)$. Using the fact that the Cousin II problem is always solvable on any one-dimesional Stein space one can prove the existence of a minimal defining function $g\in\cO(\mathbb{C}^2)$ for the set $V$. Let us define the following map
\begin{equation}\label{map}
\quad\quad\quad\quad\quad\quad\quad F = (z,w) + g^k(z,w)(P(z,w),Q(z,w)),\quad\quad\quad (k\geq 2),
\end{equation}
 where $P$ and $Q$ are holomorphic functions on $\mathbb{C}^2$ and $g$ is aminimal defining function of $V$. Observe that $F$ is tangent to identity on $V$. If we can find $P$ and $Q$ such that for each point in $\mathrm{Reg}(V)$ there exists a non-degenerate characteristic direction, then by Hakim's Theorem $\ref{thm:Hakim}$ there will be a parabolic curve attached to each point $(z,w)$ from the regular part of $V$. These curves vary continuously with the base point. The union of these curves will be contained in an inviariant Fatou component whose orbits converge to $V$. The fact that the singular points of $V$ are not contained in the limit set then follows from Theorem $\ref{smoothness}$.

Let us pick a point $(z_0,w_0)\in\mathrm{Reg}(V)$. After conjugating $F$ with the right translation map we note that the existence of a non-degenerate characteristic direction corresponds to
\begin{equation}\label{condition}
g_z(z_0,w_0)P(z_0,w_0)+g_w(z_0,w_0)Q(z_0,w_0)\neq0,
\end{equation}
where $g_z$ and $g_w$ are the partial derivatives with respect to $z$ and $w$. We see immediately that in order to have non-characteristic directions it is necessary that the gradient of $g$ does not vanish on $\mathrm{Reg}(V)$. Observe that this condition is satisfied if and only if $g$ is a minimal defining function of set $V$. In order to prove Theorem \ref{thm:regular} we first have to prove the existence of holomorphic functions $P$ and $Q$ for which (\ref{condition}) holds.

\begin{definition}Let $V$ be an analytic set and let $\mathrm{Sing}(V)$ be the set of all singular points of $V$. A function $f:V \setminus \mathrm{Sing}(V) \rightarrow \mathbb{C}$ is
said to be \emph{weakly holomorphic} on $V$, if $f$ is holomorphic on $V \setminus \mathrm{Sing}V$ and locally bounded along $\mathrm(V)$. The sheaf of weakly holomorphic functions on $V$ is denoted by $\tilde{\cO}_V$.
\end{definition}

\begin{definition} Let $V$ be an analytic set in some open set $U$ in $\mathbb{C}^n$. A holomorphic function $f$ on $U$  is called a \emph{universal denominator} for $V$ at the point $z\in V$, if  $f_z\cdot\tilde{\cO}_{V,z}\subset \cO_{V,z}$.
\end{definition}

\begin{lemma}\label{lem:PQ}
Let $g$ be the minimal defining function of an analytic set $V\subset\mathbb{C}^2$. There exist holomorphic functions $P$ and $Q$ on $\mathbb{C}^2$ such that $g_zP+g_wQ$ does not vanish on $\mathrm{Reg}(V)$.
\end{lemma}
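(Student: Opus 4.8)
The plan is to reduce the statement first to a question on $V$, then to transport it to the normalization of $V$ — where it becomes a pair of divisor problems on an open Riemann surface — and finally to descend using a global universal denominator. Since $V$ is a closed analytic subset of the Stein manifold $\mathbb{C}^2$, Cartan's Theorem B gives $H^1(\mathbb{C}^2,\mathcal{I}_V)=0$, so $\mathcal{O}(\mathbb{C}^2)\to\mathcal{O}(V)$ is surjective; hence it suffices to find $p,q\in\mathcal{O}(V)$ with $p\,g_z|_V+q\,g_w|_V$ nowhere zero on $\mathrm{Reg}(V)$, and then extend. In sheaf terms I want a global section of the ideal sheaf $\mathcal{J}:=(g_z,g_w)\mathcal{O}_V$ that does not vanish on $\mathrm{Reg}(V)$. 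Minimality of $g$ means $\mathrm{Sing}(V)=\{g=dg=0\}$, so $(g_z,g_w)$ generates the unit ideal at each point of $\mathrm{Reg}(V)$ and $\mathcal{J}$ cuts out exactly the discrete set $\mathrm{Sing}(V)$; the surjection $\mathcal{O}_V^{2}\to\mathcal{J}$ and Theorem B on $V$ show $\mathcal{J}(V)=g_z|_V\,\mathcal{O}(V)+g_w|_V\,\mathcal{O}(V)$, so it is indeed enough to produce one good global section of $\mathcal{J}$.

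Next I pass to the normalization $\nu:\widetilde V\to V$: here $\widetilde V$ is a disjoint union of open (hence Stein) Riemann surfaces, $\nu$ is finite and restricts to a biholomorphism over $\mathrm{Reg}(V)$, and $T:=\nu^{-1}(\mathrm{Sing}(V))$ is discrete in $\widetilde V$. Put $\tilde a:=\nu^{*}g_z$ and $\tilde b:=\nu^{*}g_w$ in $\mathcal{O}(\widetilde V)$; by the discussion above their common zero set is exactly $T$. Since $\mathcal{O}_{\widetilde V,y}$ is a discrete valuation ring, $(\tilde a,\tilde b)\mathcal{O}_{\widetilde V}=\mathcal{O}_{\widetilde V}(-\widetilde D)$ for an effective divisor $\widetilde D$ supported on $T$. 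By the solvability of Cousin II on a one-dimensional Stein space (the Weierstrass problem on $\widetilde V$) there is $\tilde\sigma\in\mathcal{O}(\widetilde V)$ with $\mathrm{div}(\tilde\sigma)=\widetilde D$; this $\tilde\sigma$ lies in $(\tilde a,\tilde b)$ at every stalk, so Theorem B on $\widetilde V$ yields $\tilde p,\tilde q\in\mathcal{O}(\widetilde V)$ with $\tilde\sigma=\tilde p\,\tilde a+\tilde q\,\tilde b$. In particular $\tilde p\,\tilde a+\tilde q\,\tilde b$ vanishes precisely on $T$.

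The difficulty is that $\tilde p,\tilde q$ need not come from $\mathcal{O}(V)$; they are only weakly holomorphic on $V$, since $\nu_{*}\mathcal{O}_{\widetilde V}=\widetilde{\mathcal{O}}_V$. To repair this I use a global universal denominator. The conductor $\mathfrak{c}=\operatorname{Ann}_{\mathcal{O}_V}(\widetilde{\mathcal{O}}_V/\mathcal{O}_V)$ pulls back to the ideal sheaf of an effective divisor $\widetilde C$ on $\widetilde V$, again supported on $T$; as above, Cousin II furnishes $\delta\in\mathcal{O}(\mathbb{C}^2)$ whose restriction $\delta|_V$ satisfies $\nu^{*}(\delta|_V)$ has divisor exactly $\widetilde C$, i.e.\ $\delta$ is a universal denominator for $V$ at every point and $\{\delta=0\}\cap V=\mathrm{Sing}(V)$. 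Then $\delta\,\tilde p$ and $\delta\,\tilde q$, formed inside $\widetilde{\mathcal{O}}_V$, lie in $\mathcal{O}(V)$ because $\delta\cdot\widetilde{\mathcal{O}}_V\subseteq\mathcal{O}_V$; let $P,Q\in\mathcal{O}(\mathbb{C}^2)$ extend them. On $V$ we then have $g_zP+g_wQ=\delta\,(\tilde p\,g_z|_V+\tilde q\,g_w|_V)$, and pulling back by $\nu$ this is $\nu^{*}(\delta|_V)\cdot\tilde\sigma$, whose zero set on $\widetilde V$ is $T\cup T=T$. Since $\nu$ is surjective, $g_zP+g_wQ$ vanishes on $V$ exactly along $\nu(T)=\mathrm{Sing}(V)$, which is the claim.

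I expect the main obstacle to be exactly this last descent step. Locally near each point of $V$ one can write down a good pair $(P,Q)$ with no trouble — near regular points one of $g_z,g_w$ is already a unit, and near a singular point a generic constant combination $c_1g_z+c_2g_w$ has an isolated zero — but these cannot simply be patched, since $g_zP+g_wQ$ is not a locally defined object in $(P,Q)$. The device that makes the gluing work is to factor the target function as (universal denominator) $\times$ (weakly holomorphic section of the ideal), which replaces one ill-posed patching problem by two honest divisor problems on the Riemann surface $\widetilde V$ — one for the ``Jacobian'' divisor $\widetilde D$ and one for the conductor divisor $\widetilde C$ — each solved by Cousin II. Producing the conductor divisor and the function $\delta$ with $\{\delta=0\}\cap V=\mathrm{Sing}(V)$ is the place where one-dimensionality of $V$, so that $\widetilde V$ is a Riemann surface on which every effective divisor is the divisor of a holomorphic function, is essential.
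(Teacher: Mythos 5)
Your proof is essentially correct, and its first half coincides with the paper's: pass to the normalization $\nu\colon\widetilde V\to V$, use the Weierstrass (Cousin II) theorem on the open Riemann surface to absorb the common zero divisor of $\nu^*g_z,\nu^*g_w$, and solve the resulting division problem there by Steinness, obtaining $\tilde p,\tilde q$ with $\tilde p\,\nu^*g_z+\tilde q\,\nu^*g_w$ vanishing only over $\mathrm{Sing}(V)$. The genuine difference is the descent from weakly holomorphic to holomorphic data. The paper works with \emph{local} universal denominators: near each singular point suitable powers $g_z^{m_n},g_w^{m_n}$ are universal denominators (Narasimhan), a binomial expansion of $(g_zp+g_wq)^{2m_n+1}$ yields local holomorphic pairs $(P_n,Q_n)$, the local functions $h_n=g_zP_n+g_wQ_n$ are then glued by a Cousin II problem on $V$ itself, and a final application of Theorem B to the ideal sheaf $(g_z,g_w)\mathcal{O}_V$ produces global $P,Q$. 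You instead manufacture one \emph{global} universal denominator $\delta$ from the conductor $\mathfrak{c}$ and simply take $P=\delta\tilde p$, $Q=\delta\tilde q$; this shortens the endgame (no gluing of local pairs and no second division step on $V$) at the cost of having to produce $\delta$ globally.

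That production of $\delta$ is the one step you must justify rather than attribute to ``Cousin II'': the Weierstrass theorem on $\widetilde V$ gives $\tilde\delta\in\mathcal{O}(\widetilde V)$ with divisor $\widetilde C$, and a priori this only induces a \emph{weakly} holomorphic function on $V$, which is exactly the defect you are trying to remove. The repair is short and should be stated: the conductor is also an ideal of $\widetilde{\mathcal{O}}_V$ (if $f\in\widetilde{\mathcal{O}}_V$ and $f\,\widetilde{\mathcal{O}}_V\subset\mathcal{O}_V$ then $f=f\cdot 1\in\mathcal{O}_V$), and since $\nu$ is finite its stalk at $a\in\mathrm{Sing}(V)$ decomposes as a product of nonzero ideals of the discrete valuation rings $\mathcal{O}_{\widetilde V,y}$, $y\in\nu^{-1}(a)$; in other words $\mathfrak{c}$ corresponds under $\nu$ precisely to $\mathcal{O}_{\widetilde V}(-\widetilde C)$. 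Hence any $\tilde\delta$ with $\operatorname{div}(\tilde\delta)\geq\widetilde C$ pushes forward to a section of $\mathfrak{c}\subset\mathcal{O}_V$, so it is genuinely holomorphic on $V$, is a universal denominator at every point, and is nonvanishing on $\mathrm{Reg}(V)$ because $\operatorname{supp}\widetilde C\subset\nu^{-1}(\mathrm{Sing}(V))$ (the equality $\{\delta=0\}\cap V=\mathrm{Sing}(V)$ is not actually needed); one then extends it to $\mathbb{C}^2$ by Theorem B, and coherence of $\mathfrak{c}$ (via Oka's theorem that $\nu_*\mathcal{O}_{\widetilde V}$ is coherent) legitimizes the sheaf-theoretic bookkeeping. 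With that justification supplied, your argument is a complete and valid alternative to the paper's descent.
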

\begin{proof} The partial derivatives of $g$ with respect to $z$ and $w$ will be denoted by $g_z$ and $g_w$ respectively.
Recall that an analytic set $V=\{g(z,w)=0\}$ is a Stein space. Let us denote by $\cO_V$ the coherent sheaf of holomorphic functions on $V$.

If $V$ is a manifold, then the partial derivatives $g_z|_V$ and $g_w|_V$ do not have common zeros and so they generate $\cO_V$. By Cartan's Division Theorem (Corollary 2.4.4., \cite{For}) there are $p,q\in\cO_V$ such that $p\cdot g_z|_V+q\cdot g_w|_V=1$. Cartan's Extension Theorem (Corollary 2.4.3., \cite{For}) gives that every holomorphic function on $V$ can be extended to a holomorphic function on $\mathbb{C}^2$, which proves the existence of desired holomorphic functions $P$ and $Q$.

In general we can not expect $V$ to be a manifold. Let us denote the (discrete) set of singular points of $V$ by $\{a_n\}_{n\geq1}$. By our assumption on $g$ we have that $\mathrm{Sing}(V)= \{g = dg = 0\}$. Since $V$ is one-dimension variety, it has a normalization given by a non-compact Riemann surface $M$ together with a holomorphic map $\pi:M\rightarrow V$, \cite{Cir}. We can lift $g_z$ and $g_w$ to holomorphic functions on $M$ denoted by $\varphi$ and $\psi$ respectively. Since $M$ is a non-compact Riemann surface, by Weierstrass Theorem (Theorem 26.7., \cite{Fo}), we can find a holomorphic function $\theta$ on $M$ with prescribed zeros, such that $\varphi/\theta$ and $\psi/\theta$ are holomorphic on $M$ and without common zeros. As before, $M$ is a Stein manifold and $\varphi/\theta$, $\psi/\theta$ generate $\cO_M$, so we can find $\tilde{p},\tilde{q}\in\cO_M$ such that $$\frac{\varphi}{\theta}\tilde{p}+\frac{\psi}{\theta}\tilde{q}=1,$$  hence
\begin{equation*}
\varphi\tilde{p}+\psi\tilde{q}=\theta.
\end{equation*}
The functions $\tilde{p}$ and $\tilde{q}$ induce weakly holomorphic functions $p,q\in \tilde{\cO}_V$. For every $a_n\in \mathrm{Sing}(V)$ there exists an integer $m_n>0$ such that $g_z^{m_n}$ and $g_w^{m_n}$ are universal denominators in some neighborhood  $a_n\in U_n\subset\mathbb{C}^2$ (Corollary $3.$ \cite{N1966}, p.$59.$). By expanding $(g_zp+g_wq)^{2m_n+1}$ in to series we obtain
\begin{equation*}
\label{1} (g_zp+g_wq)^{2m_n+1}=g_zP_n+g_wQ_n,
\end{equation*}
with
\begin{align*}
P_n & =g_z^{m_n}\sum_{k=0}^{m_n} \binom{2m_n+1}{k}  p^{2m_n+1-k}g_z^{m_n-k}(qg_w)^k, \; \; \mathrm{and}\\
Q_n & =g_w^{m_n}\sum_{k=0}^{m_n} \binom{2m_n+1}{k}  q^{2m_n+1-k}g_w^{m_n-k}(pg_z)^k.
\end{align*}

Since $g_z^{m_n}$ and $g_w^{m_n}$ are universal denominators we can conclude that $P_n$ and $Q_n$ are holomorphic on $U_n$.
From the construction it follows that $g_zp+g_wq$ is non-zero on $\mathrm{Reg}(V)\cap  U_n$, and the same holds for the function $h_n:=g_zP_n+g_wQ_n$. We can take $\{U_n\}_{n\geq 1}$ to be pairwise disjoint. Let us take one more open set $U_0\subset\mathbb{C}^2$ such that $W_0=U_0\cap V\subset\mathrm{Reg}(V)$ and that $\{W_n=U_n\cap V\}_{n\geq0}$ covers $V$. The function $h_0=g_zp+g_wq$ is non-zero holomorphic on $W_0$.  Note that $\{(h_n,W_n)\}_{n\geq0}$ is a Cousin II distribution  and the Cousin II problem is always solvable on any one-dimensional Stein space (\cite{GR}, p.148). Hence there exists a holomorphic function $H\in \cO_V$ with the property
\begin{equation*}
H|_{W_n}=h_n\varphi_n,
\end{equation*}
where the holomorphic functions $\varphi_n$ are non-zero on $W_n$. If we define $\tilde{P_n}=P_n\varphi_n$ and $\tilde{Q_n}=Q_n\varphi_n$  (where $P_0=p$ and $Q_0=q$) observe that $$H|_{W_n}=\tilde{P_n}g_z+\tilde{Q_n}g_w.$$

Let $\mathcal{J}_V\triangleleft \cO_V $ be the ideal sheaf  generated by  $g_z$ and $g_w$. Since $\mathcal{J}_V$ is a coherent analytic sheaf we are given a short exact sequence
$$0\rightarrow\cR\stackrel{i}{\hookrightarrow}\cO^2_V\stackrel{\tau}{\rightarrow}\mathcal{J}_V\rightarrow 0$$
where $i$ is an inclusion map and $\tau$ maps $(f_1,f_2)$ into $f_1g_z+f_2g_w$. Now we can form a long exact sequence of cohomology grups
$$0\rightarrow \Gamma(V,\cR)\stackrel{i^*}{\rightarrow}\Gamma(V,\cO^2_V)\stackrel{\tau^*}{\rightarrow}\Gamma(V,\mathcal{J}_V)\rightarrow H^1(V,\cR)\rightarrow\cdots$$
and since $V$ is Stein we know that $H^1(V,\cR)=0$, hence $\tau^*$ is surjective. Observe that $H$ induces a section of $\mathcal{J}_V$. Since $\tau^*$ is surjective this section can belifted to a section of $\cO_V^2$ (Section 7.2 \cite{Kra}), hence there exist holomophic functions $P$, $Q$ on $V$ such that $Pg_z+Qg_w=H$ on $V$. By Cartan's Extension Theorem we can extend $P$ and $Q$ to holomorphic functions on $\mathbb{C}^2$.
\end{proof}

From now on let $F$ be of the form given in (\ref{map}), where $P$ and $Q$ are as in Lemma \ref{lem:PQ}.

\begin{lemma}\label{lemma:local}
For any $(z,w) \in \mathrm{Reg}(V)$ we can find a local change of coordinates that maps $(z,w)$ to $(0,0)$ so that in the new coordinates $F$ takes the form
\begin{equation}\label{germ}
\left\{ \begin{aligned}
x_1 & = x - x^k + x^k O(x,u),\\
u_1 & = u + x^k O(x,u).
\end{aligned} \right.
\end{equation}
\end{lemma}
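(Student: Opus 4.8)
The plan is to produce the normal form in two stages: first a linear-algebraic straightening that puts $\mathrm{Reg}(V)$ locally onto a coordinate axis and makes $g$ coincide with a coordinate function, and then a direct expansion of $F$ in the new coordinates, where the conclusion will fall out of Lemma \ref{lem:PQ} together with the precise shape of the map \eqref{map}. Fix $(z_0,w_0)\in\mathrm{Reg}(V)$. Since the gradient $(g_z,g_w)$ does not vanish at $(z_0,w_0)$ (this is exactly the statement that $g$ is a minimal defining function), the level set $\{g=0\}$ is a smooth curve through $(z_0,w_0)$, and by the holomorphic implicit function theorem there is a biholomorphic change of coordinates $(z,w)\mapsto(x,u)$ defined near $(z_0,w_0)$, sending $(z_0,w_0)$ to $(0,0)$, in which $V=\{u=0\}$ and $g$ becomes $g = u\cdot \mathrm{unit}$; absorbing the unit into $u$ (a further coordinate change), we may assume outright that $g(x,u)=u$ in the new coordinates. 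In particular $g_z, g_w$ transform, and the chain rule identifies $g_x\equiv 0$, $g_u\equiv 1$; I will track how the vector field $(P,Q)$ transforms under the same change of coordinates, calling its components $(\tilde P,\tilde Q)$ in the $(x,u)$-chart.

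Next I would substitute into \eqref{map}. Conjugating $F$ by the translation taking $(z_0,w_0)$ to the origin and by the biholomorphism above, $F$ takes the form $(x,u)\mapsto (x,u) + u^k\,(\tilde P(x,u),\tilde Q(x,u))$, since $g^k$ becomes $u^k$. Wait — I must be careful about which axis $V$ sits on: in \eqref{germ} the nilpotent behaviour is in the $x$-variable and $u$ is the "good" direction, whereas above $V=\{u=0\}$. So I should instead arrange the implicit-function coordinates so that $V=\{x=0\}$, i.e. $g=x$ in the new chart, $g_x\equiv1$, $g_u\equiv0$. Then $F$ becomes
\begin{equation*}
(x,u)\longmapsto \bigl(x + x^k\tilde P(x,u),\; u + x^k\tilde Q(x,u)\bigr).
\end{equation*}
This already has the right skeleton except for the sign/normalization of the leading $x^k$ term. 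The point is that $\tilde P(0,0)$ equals, up to the nonzero Jacobian factors from the coordinate change, the quantity $g_z(z_0,w_0)P(z_0,w_0)+g_w(z_0,w_0)Q(z_0,w_0)$ appearing in \eqref{condition}, which is nonzero by Lemma \ref{lem:PQ} (indeed, $g_x\tilde P + g_u\tilde Q = \tilde P$ is the transform of $g_zP+g_wQ$). Hence $\tilde P(x,u) = c + O(x,u)$ with $c\neq 0$, and a final scaling of the $x$-coordinate by $(-c)^{-1/(k-1)}$ (together with the corresponding adjustment of $u$, or just leaving $u$ alone and absorbing constants into the $O$-term) normalizes the leading coefficient to $-1$. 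Writing $\tilde P = -1 + O(x,u)$ and $\tilde Q = O(x,u)$ — where I should check that $\tilde Q(0,0)$ need not vanish but whatever its value, it is lumped into the $O$ of the second line, which only asserts the $x^k$ prefactor — gives precisely \eqref{germ}:
\begin{equation*}
x_1 = x - x^k + x^k O(x,u),\qquad u_1 = u + x^k O(x,u).
\end{equation*}

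The one genuine subtlety, and the step I expect to require the most care, is the bookkeeping of how the rescaling interacts with the claim that the $O(x,u)$ terms carry no constant term eating into the $-x^k$: one must verify that after the scaling $x\mapsto \mu x$ the map still has $x - x^k$ as its $\le k$ part in $x$, i.e. that the scaling does not reintroduce a linear defect (it does not, since $DF=\mathrm{Id}$ on $V$ forces the correction to start at order $k$ in $x$, and scaling a monogeneous-in-$x$ leading term is exactly what fixes the coefficient) and that the second component genuinely has every term divisible by $x^k$ (clear, since the entire correction in \eqref{map} carries the factor $g^k = x^k$). Everything else — the implicit function theorem, the chain-rule transformation of the gradient, the invocation of Lemma \ref{lem:PQ} to get $c\neq0$ — is routine, so the proof is short once the coordinates are set up with $V=\{x=0\}$.
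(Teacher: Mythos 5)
Your first half runs parallel to the paper's proof: straighten $V$ to $\{x=0\}$ so that the correction term in \eqref{map} becomes $x^k(\tilde P,\tilde Q)$, use the invariance of $\nabla g\cdot(P,Q)$ under the chart together with Lemma \ref{lem:PQ} (i.e.\ \eqref{condition}) to get $\tilde P(0,0)=c\neq 0$, and rescale $x$ by $(-c)^{1/(1-k)}$ to normalize the leading coefficient to $-1$. The genuine gap is exactly the point you flagged and then dismissed: the constant term $\tilde Q(0,0)$. The normal form \eqref{germ} is meant with $O(x,u)$ denoting terms vanishing at the origin; in the second line this says that the $x^k$-coefficient of $u_1-u$ has no constant term, and that is \emph{not} automatic --- it fails whenever $\tilde Q(0,0)\neq 0$. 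The paper removes it by one more linear change of coordinates, a shear fixing $\{x=0\}$ that maps the non-degenerate characteristic direction $(1,\lambda)$, $\lambda=\tilde Q(0,0)/\tilde P(0,0)$, to $(1,0)$: under $(x,u)\mapsto(x,\,u-\lambda x)$ the second component becomes $u+x^k\bigl(\tilde Q-\lambda\tilde P\bigr)(x,u+\lambda x)$, whose coefficient vanishes at the origin precisely by the choice of $\lambda$. This is the step your proposal is missing, and it is the reason the lemma is phrased through non-degenerate characteristic directions in the first place.

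Your alternative reading --- that the second line of \eqref{germ} ``only asserts the $x^k$ prefactor'' --- is not a legitimate way out. With that reading the lemma would be an immediate restatement of \eqref{map} in straightened coordinates, and it would not support the later arguments: the rewriting \eqref{germ2} and the estimates \eqref{eq:rotation2} and \eqref{eq:attract2} in Lemma \ref{lemma:attracting} need $|u_1-u|=o(|x|^k)$ as $(x,u)\to(0,0)$, i.e.\ exactly that the constant term has been killed. If $\tilde Q(0,0)\neq 0$ then $|u_1-u|\approx|\tilde Q(0,0)|\,|x|^k$ is comparable to $|x|-|x_1|\sim|x|^k$ in the attracting sectors, and the inequality $|x|-|x_1|\gg|u_1-u|$ breaks down, so the control of the drift in $u$ (and hence the construction of the regions $R_\epsilon(m)$ and the convergence to $V$) is lost. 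Insert the characteristic-direction shear before your final rescaling and the rest of your argument is fine, and then coincides with the paper's proof.
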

\begin{proof}
Near any regular point of $V$ we can change coordinates so that $V$ is given by $\{(z,w) \mid z = 0\}$. By its definition the map $F$ now takes the form
\begin{equation*}
\left\{ \begin{aligned}
z_1 & = z + z^kp(z,w),\\
w_1 & = w + z^kq(z,w).
\end{aligned} \right.
\end{equation*}
Note that if we write $F = Id + F_k + F_{k+1} + \ldots$, where $F_j$ is homogeneous of degree $j$, then the fact that $dg \neq 0$ on $V$ implies that $F_k$ is non-zero at $(0,0)$. The assumption that $g_zP+g_wQ$ does not vanish on $\mathrm{Reg}(V)$ implies that there is a non-degenerate characteristic direction at each point in $\mathrm{Reg}(V)$, in particular at $(0,0)$. This direction cannot be $(0,1)$, which is degenerate, hence we may write $(1,\lambda)$ for the non-degenerate characteristic direction. We can now find a linear change of coordinates that fixes $V = \{z = 0\}$ and changes the non-degenerate characteristic direction to the tangent vector $(1,0)$. Therefore in these new coordinates $F$ takes on the form
\begin{equation*}
\left\{ \begin{aligned}
x_1 & = x + \beta x^k + x^k O(x,u),\\
u_1 & = u + x^k O(x,u).
\end{aligned} \right.
\end{equation*}
By conjugating with a linear map $(x,u) \mapsto ((- \beta)^{\frac{1}{1-k}}x, u)$ we obtain the required form.
\end{proof}

Let us rewrite Equation \eqref{germ} as
\begin{equation}\label{germ2}
\left\{ \begin{aligned}
x_1 & = x - x^k(1 + u \phi(u)) + O(x^{k+1}),\\
u_1 & = u + x^k O(x,u).
\end{aligned} \right.
\end{equation}
We note that for a fixed value of $u$ the real attracting directions of the map $x \mapsto x_1$ satisfy
\begin{equation*}
\mathrm{Arg}(x) = \mathrm{Arg}(x^k(1 + u\phi(u))) = k \mathrm{Arg}(x) + \mathrm{Arg}(1 + u\phi(u)),
\end{equation*}
hence the $k-1$ attracting real directions satisfy
\begin{equation*}
\mathrm{Arg}(x) = \theta_u(m) = \frac{2m\pi - \mathrm{Arg}(1 + u \phi(u)) }{k-1},
\end{equation*}
for $m = 1 , \ldots , k-1$. For $\epsilon >0 $ sufficiently small we define the regions
\begin{equation*}
R_\epsilon(m) := \left\{(x,u) \mid 0 < |x| < \epsilon, \; |u| < 2 \epsilon, \; |\mathrm{Arg}(x) - \theta_u(m)| < \frac{\pi}{2k-2} \right\}.
\end{equation*}

\begin{lemma}\label{lemma:attracting}
For $\epsilon>0$ sufficiently small the iterates $F^{n}$ converge uniformly on each $R_\epsilon(m)$ to the axis $\{x = 0\}$.
\end{lemma}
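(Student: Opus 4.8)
The plan is to analyze the one-parameter family of germs obtained by restricting the map in Equation \eqref{germ2} to each fiber $\{u = \text{const}\}$, treating $u$ as a parameter, and then to run the standard ``petal'' argument of parabolic dynamics uniformly in $u$. Fix $m \in \{1, \ldots, k-1\}$. First I would pass to a fractional coordinate adapted to the attracting direction $\theta_u(m)$: setting $x = t^{1/(k-1)} \zeta$ with $t = t(u) = (1 + u\phi(u))^{-1}$ and writing the dynamics in $\xi = x^{1-k}$, one gets $\xi_1 = \xi + (k-1)(1 + u\phi(u)) + O(|\xi|^{-1/(k-1)})$ on the fiber, with the error term also absorbing the $u$-dependence of the higher-order part of the $x$-equation and the (small, $O(x^k)$) drift in $u$. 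The region $R_\epsilon(m)$ is, up to the coordinate change, a standard ``half-plane-like'' domain $\{\mathrm{Re}(\xi) > 1/\delta\}$ in the $\xi$-variable, so the point is to show that such a domain is forward-invariant and that orbits escape to $\infty$ in $\xi$, i.e. $x_n \to 0$, uniformly.

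The key steps, in order: (1) rewrite the $x$-equation on a fixed fiber in the $\xi$-coordinate and record that all constants and error bounds are uniform for $|u| < 2\epsilon$, using continuity of $\phi$ and the fact that $1 + u\phi(u)$ is bounded away from $0$ there (shrinking $\epsilon$ if necessary); (2) show the region $R_\epsilon(m)$ (equivalently the half-plane $\{\mathrm{Re}(\xi) > 1/\delta\}$ intersected with $|u| < 2\epsilon$) is forward $F$-invariant — this uses that the real part of $\xi$ increases by roughly $(k-1) > 0$ at each step while the imaginary part moves by a bounded amount, and crucially that the $u$-coordinate moves only by $O(x^k) = O(|\xi|^{-k/(k-1)})$, a summable quantity, so that $|u_n|$ stays below $2\epsilon$ provided $|u_0| < \epsilon$ and $\epsilon$ is small (the half-annulus-type opening of $R_\epsilon(m)$, namely the angle condition $|\mathrm{Arg}(x) - \theta_u(m)| < \pi/(2k-2)$, is exactly what makes the initial $\xi_0$ land in the half-plane, with the $u$-dependence of $\theta_u(m)$ causing no trouble since it is continuous and the opening is strictly less than the critical angle $\pi/(k-1)$); (3) conclude from (2) that along any orbit $\mathrm{Re}(\xi_n) \geq \mathrm{Re}(\xi_0) + \tfrac{k-1}{2} n$, hence $|x_n| = |\xi_n|^{-1/(k-1)} \leq C n^{-1/(k-1)} \to 0$, and the convergence is uniform on $R_\epsilon(m)$ because $C$ depends only on $\epsilon$; finally, since the $u$-coordinate is Cauchy (its increments are summable uniformly) the full map $F^n$ converges uniformly on $R_\epsilon(m)$ to a map with image in $\{x = 0\}$.

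The main obstacle is controlling the interaction between the $x$-dynamics and the drift in $u$ uniformly: a priori the attracting direction $\theta_u(m)$ depends on $u$, and $u$ itself changes (albeit slightly) along the orbit, so one must make sure that as $u_n$ wanders the orbit stays inside the union of the fiberwise petals. This is handled by the observation that the $u$-increments are $O(|x_n|^k)$, which after step (3) is $O(n^{-k/(k-1)})$ and hence summable with small total mass — so $u_n$ stays within $\epsilon$ of $u_0$, and the corresponding shift in $\theta_{u_n}(m)$ is controlled by the modulus of continuity of $u \mapsto \mathrm{Arg}(1 + u\phi(u))$, which can be made smaller than the slack $\pi/(2k-2)$ built into the definition of $R_\epsilon(m)$ by shrinking $\epsilon$. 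Everything else is the textbook parabolic-petal estimate applied with parameters, so once the uniformity bookkeeping is set up the proof is routine.
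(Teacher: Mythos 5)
Your argument is correct, and at heart it is the same strategy as the paper's: run the standard fixed-$u$ parabolic (Leau--Fatou) petal estimate and then check that the slow drift in $u$ cannot destroy it. The difference is in how the drift is controlled. You pass explicitly to the Fatou coordinate $\xi = x^{1-k}$ (after normalizing by $1+u\phi(u)$), get $\mathrm{Re}\,\xi_n \gtrsim n$, deduce $|x_n| \le C n^{-1/(k-1)}$, and then kill the drift by summability: the $u$-increments are $x^k O(x,u) = O(\epsilon|x_n|^k)$, so the total drift is $O(\epsilon^2)$, and the induced rotation of the adapted frame contributes only $O(|\xi|^{-1/(k-1)})$ to the $\xi$-recursion, which is dominated by the $(k-1)$ real translation. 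The paper instead never introduces $\xi$ and splits $R_\epsilon(m)$ into an outer angular band and a central sector, asserting the per-step domination inequalities \eqref{eq:rotation2} and \eqref{eq:attract2} (angular progress, respectively modulus decrease, $\gg |u_1-u|$); your cumulative/summability bookkeeping and their per-step comparisons are two standard ways to organize the same uniformity. Two small points to tighten in your write-up: the summability of $|x_n|^k$ is used to keep the orbit in the region where your expansion is valid, so it should be phrased as a bootstrap/induction (assume the orbit has stayed in a slightly enlarged region up to time $n$, conclude the drift so far is $O(\epsilon^2)$, hence it stays there at time $n+1$); and note that smallness of the total drift relative to $\epsilon$ comes from the extra factor $O(x,u)=O(\epsilon)$ in the $u$-equation of \eqref{germ2} (or from $|\xi_0|^{-1/(k-1)}\le\epsilon$), not merely from summability of $n^{-k/(k-1)}$. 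With those remarks your proof is complete and uniform on $R_\epsilon(m)$, as required.
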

\begin{proof}
For a fixed $u$-value the convergence of the $x$-coordinate to $0$ follows from standard estimates. Two estimates guarantee that variation in the $u$-coordinate does not break this convergence. On the one hand we have for sufficiently small $x, u$ that satisfy
\begin{equation}\label{eq:middle}
\frac{1}{3}\frac{\pi}{2k-2} \le \left|\mathrm{Arg}(x) - \theta_u(m)\right| \le  \frac{5}{3}\frac{\pi}{2k-2}
\end{equation}
the estimate
\begin{equation}\label{eq:rotation}
\left|\mathrm{Arg}(x_1) - \theta_{u_1}(m)\right| < \left|\mathrm{Arg}(x) - \theta_u(m)\right|,
\end{equation}
and even
\begin{equation}\label{eq:rotation2}
|x|\left(\left|\mathrm{Arg}(x) - \theta_u(m)\right| - \left|\mathrm{Arg}(x_1) - \theta_{u_1}(m)\right|\right) \gg |u_1 - u|.
\end{equation}
On the other hand we have for sufficiently small $x, u$ that satisfy
\begin{equation}\label{eq:center}
\left|\mathrm{Arg}(x) - \theta_u(m)\right| <  \frac{1}{2}\frac{\pi}{2k-2}
\end{equation}
that
\begin{equation}\label{eq:attract}
|x_1| < |x|,
\end{equation}
and even
\begin{equation}\label{eq:attract2}
|x| - |x_1| \gg |u_1 - u|.
\end{equation}
The statement of the lemma follows.
\end{proof}

Let us write
\begin{equation*}
U_\epsilon = \left\{(x,u) \mid |x| < \epsilon, \; |u| < 2\epsilon \right\},
\end{equation*}
and define the nested sequence
\begin{align*}
W_0(m) & = R_\epsilon(m), \; \; \; \mathrm{and} \\
W_{n+1}(m) & = (W_n(m) \cup f^{-1} W_{n}(m)) \cap U_\epsilon.
\end{align*}
Finally we define
\begin{equation*}
W(m) = \bigcup_{n \in \mathbb{N}} W_n(m).
\end{equation*}
Since $f$ is invertible in a neighborhood of $(0,0)$, for sufficiently small $\epsilon$ the sets $W(m)$ are open, connected and disjoint. Moreover we have the following.

\begin{lemma}\label{lemma:separatrix}
Suppose $x_0$ and $u_0$ satisfy $|x_0| < \epsilon$ and $|u_0| < \epsilon$, and $(x_0, u_0)$ does not lie in one of the sets $W(m)$. Then there is an $N \in \mathbb{N}$ such that the orbit $(z_n) = ((x_n, u_n))$ satisfies
\begin{enumerate}
\item[$\bullet$] For $n \le N$ we have $|x_n| < \epsilon$ and $|u_n| < 2 \epsilon$, and
\item [$\bullet$] $|x_N| \ge \epsilon$.
\end{enumerate}
\end{lemma}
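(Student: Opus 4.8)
The plan is the following. An orbit that starts in $\{|x|<\epsilon,\ |u|<\epsilon\}$ and never meets any $W(m)$ should drift monotonically outward in the fast coordinate $x$ while its slow coordinate $u$ stays essentially frozen, so that it exits $U_\epsilon$ through the face $\{|x|=\epsilon\}$ after finitely many steps. First I would make the combinatorial content explicit: unwinding the recursion $W_{n+1}(m)=(W_n(m)\cup f^{-1}W_n(m))\cap U_\epsilon$ shows that $(x_0,u_0)\notin\bigcup_m W(m)$ is equivalent to the statement that, for every $n$, if $|x_j|<\epsilon$ and $|u_j|<2\epsilon$ for all $j\le n$, then $(x_n,u_n)\notin R_\epsilon(m)$ for all $m$. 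Hence, on every initial segment of the orbit contained in $U_\epsilon$, $\mathrm{Arg}(x_n)$ avoids all attracting sectors and therefore lies within $\frac{\pi}{2k-2}$ of one of the repelling directions $\sigma_{u_n}$, the midpoints between consecutive attracting directions $\theta_{u_n}(m)$; since each iteration changes the argument only by $O(|x_n|^{k-1})$ it cannot jump across an attracting sector into a neighbouring repelling one, so the orbit is trapped in a single repelling sector.

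Next I would record the leading–order behaviour of the germ \eqref{germ2} inside such a repelling sector, in complete analogy with Lemma \ref{lemma:attracting}. Writing $c=1+u\varphi(u)$ one has $|x_1|^2=|x|^2-2|x|^{k+1}\mathrm{Re}\!\big(c\,e^{i(k-1)\mathrm{Arg}(x)}\big)+O(|x|^{2k})$, and in a repelling sector $\mathrm{Re}\!\big(c\,e^{i(k-1)\mathrm{Arg}(x)}\big)\le 0$, with a definite negative gap as long as $\mathrm{Arg}(x)$ is at distance at most $\frac{3}{4}\cdot\frac{\pi}{2k-2}$ from $\sigma_u$; on that \emph{inner} part $|x_n|^{-(k-1)}$ decreases by at least a fixed positive amount at each step. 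On the complementary \emph{outer} part, near the boundary of the repelling sector, the argument is pushed away from $\sigma_u$, so that $d_n:=|\mathrm{Arg}(x_n)-\sigma_{u_n}|$ increases; and, exactly as in \eqref{eq:rotation}, \eqref{eq:rotation2}, \eqref{eq:attract}, \eqref{eq:attract2}, for $\epsilon$ small both the gain of $|x_n|^{-(k-1)}$ on the inner part and the gain of $d_n$ on the outer part dominate the variation $|u_{n+1}-u_n|=O(|x_n|^k)$ — which also controls the drift of $\sigma_u$ and $\theta_u$, since $\varphi$ is holomorphic near $0$. Thus $|x_n|$ and $d_n$ are non-decreasing and the orbit cannot remain in $U_\epsilon\setminus\bigcup_m R_\epsilon(m)$ forever: infinitely many inner steps would force $|x_n|^{-(k-1)}\to-\infty$, while eventually remaining in the outer part would push $d_n$ past $\frac{\pi}{2k-2}$ and so force the orbit into some $R_\epsilon(m)$, which is excluded.

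To finish I would check that the exit from $U_\epsilon$ occurs through $\{|x|=\epsilon\}$ and not through $\{|u|=2\epsilon\}$. For this I estimate the total $u$-displacement along any orbit segment lying in $U_\epsilon\setminus\bigcup_m R_\epsilon(m)$: from \eqref{germ2} one has $|u_{n+1}-u_n|\le C|x_n|^{k}(|x_n|+|u_n|)\le 3C\epsilon|x_n|^{k}$, and because $|x_n|^{-(k-1)}$ drops by a fixed amount at every inner step the sum $\sum_n|x_n|^{k}$ is bounded, up to a constant, by $\int_{\epsilon^{-(k-1)}}^{\infty}v^{-k/(k-1)}\,dv=(k-1)\epsilon$, the comparatively few outer steps contributing $O(\epsilon^{2})$ as well. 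Hence the total $u$-displacement is $O(\epsilon^{2})$, so $|u_n|$ stays below $\frac{3}{2}\epsilon$ throughout; combined with the previous paragraph this shows the orbit leaves $U_\epsilon$ with $|x_n|\ge\epsilon$ after finitely many steps. Taking $N$ to be the first such index then gives $|u_n|<2\epsilon$ for all $n\le N$ and $|x_n|<\epsilon$ for $n<N$, as required.

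I expect the main obstacle to be the second step: showing the orbit really escapes $U_\epsilon$ in finitely many iterations rather than asymptoting to the boundary of a repelling sector. This needs the two monotonicities in the repelling sectors to be made genuinely quantitative — a fixed decrease of $|x_n|^{-(k-1)}$ on the inner part, a strictly positive increase of $d_n$ on the outer part — and, crucially, each of them to beat the slow $u$-drift; this is precisely the analogue in the repelling sectors of the delicate estimates \eqref{eq:rotation}--\eqref{eq:attract2} underlying Lemma \ref{lemma:attracting}.
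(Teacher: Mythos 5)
Your proposal is correct and follows essentially the same route as the paper: avoidance of the sets $W(m)$ traps the orbit in a repelling sector, where the modulus of $x$ grows (dominating the $u$-drift) in the central part and the argument drifts outward (again dominating the $u$-drift) in the peripheral part covered by \eqref{eq:middle}--\eqref{eq:rotation2}, forcing escape through $\{|x|=\epsilon\}$. The only cosmetic difference is that the paper obtains the expansion estimates by applying \eqref{eq:center}--\eqref{eq:attract2} to $F^{-1}$, whereas you derive them directly from the leading term of \eqref{germ2} and in addition spell out the summation of the $u$-increments, which the paper leaves implicit in the ``$\gg|u_1-u|$'' inequalities.
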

\begin{proof}
Considering Equations \eqref{eq:center}, \eqref{eq:attract} and \eqref{eq:attract2} for $F^{-1}$ gives for sufficiently small $x, u$ that satisfy
\begin{equation*}
\left|\mathrm{Arg}(x) - (\theta_u(m) + \frac{\pi}{k-1})\right| <  \frac{1}{2}\frac{\pi}{2k-2}
\end{equation*}
the estimate
\begin{equation*}
|x_1| > |x|,
\end{equation*}
and even
\begin{equation*}
|x_1| - |x| \gg |u_1 - u|.
\end{equation*}
The combination with Equations \eqref{eq:middle}, \eqref{eq:rotation} and \eqref{eq:rotation2} implies the statements of the lemma.
\end{proof}

We can now complete the proof of the main result of this section.

\begin{proof}[{\bf Proof of Theorem \ref{thm:regular}}]
Let $g$ be a minimal defining function of $V$ and let $P$ and $Q$ be as in Lemma \ref{lem:PQ}. Let $k\geq 2$ and define
\begin{equation*}
F = (z,w) + g^k(z,w)(P(z,w),Q(z,w)).
\end{equation*}
By Lemma \ref{lemma:local} we can find a local change of coordinates near any point in $(z,w)\in\mathrm{Reg}(V)$ to obtain the form \eqref{germ}. In particular by Theorem \ref{thm:Hakim} we obtain $k-1$ parabolic curves on which we have attraction towards (in original coordinates) the point $(z, w)$.

Let us first assume that $\mathrm{Reg}(V)$ is connected. By Lemma \ref{lemma:local} and Lemma \ref{lemma:attracting} there exist, in a neighborhood of the point $(x,y)$, $k-1$ open regions $R_\epsilon(m)$ on which all orbits converge uniformly to $V$. These regions vary smoothly with the point $(x,y)$ and each region $R_\epsilon(m)$ intersects at least one of the $k-1$ parabolic curves at $(x,y)$. Consider the union over all base points $(x,y)$ of the regions $R_\epsilon(m)$, and let $\tilde{\Omega}$ be one of the connected components of this union. Then $F^n$ converges uniformly on $\tilde{\Omega}$ to $\mathrm{Reg}(V)$. Hence $\tilde{\Omega}$ is contained in a Fatou component for $F$, which we denote by $\Omega$.

By the aforementioned non-empty intersection with the parabolic curves, there exist for each point $z \in \mathrm{Reg}(V)$ corresponding points in $\Omega$ whose orbits converge to $z$. Hence on $\Omega$ the iterates $F^n$ converges to a holomorphic map whose image contains $\mathrm{Reg}(V)$. By Lemma \ref{lemma:separatrix} it follows that $\mathrm{Reg}(V)$ cannot be contained in $\Omega$, hence must lie in $\partial \Omega$. Thus $\Omega$ is a non-recurrent Fatou component. By Theorem \ref{smoothness} it follows that the limit set must be exactly equal to $\mathrm{Reg}(V)$.

Now suppose that $\mathrm{Reg}(V)$ is not connected and let $V_1$ be an irreducible component of $V$. By the above argument there is a Fatou component $\Omega$ on which all orbits converge to $V_1 \setminus \mathrm{Sing}(V_1)$. It remains to be shown that there are no points converging to intersection points of $V_1$ with other irreducible components of $V$. Let $V_2 \neq V_1$ be a connected component of $V$ and assume that there is a point $z \in V_1 \cap V_2$. Let us recall an argument that was used in the proof of Theorem \ref{smoothness}. Suppose for the purpose of a contradiction that there exists a point $x \in \Omega$ whose orbit converges to $z$. Denote the limit of the sequence $(F^n)$ on $\Omega$ by $h$. Let $D$ be a holomorphic disk through $x$ so that $h(D) = h(U)$ for a small neighborhood $U$ of $x$. Then $F^n(D)$ intersects $V_2$ for $n$ large enough. But as we noted earlier, $V_2$ lies in the Julia set while $F^n(D)$ lies in the Fatou set, which gives a contradiction. This completes the proof. \end{proof}

Let $\Omega$ be a Fatou component on which the orbits converge to the regular partner of an irreducile component $V_1 \subset V$. Then by Lemmas \ref{lemma:attracting} and \ref{lemma:separatrix} the orbit of any point in $\Omega$ eventually lands in one of the sets $R_\epsilon(m)$. Vice versa, any point whose orbit eventually lands in $R_\epsilon(m)$ is contained in $\Omega$. In particular the parabolic curve at $z$ corresponding to $R_\epsilon(m)$ must be contained in $\Omega$.

\begin{proposition}
The orbit of any point in $\Omega$ is evetually mapped onto a parabolic curves contained in $\Omega$.
\end{proposition}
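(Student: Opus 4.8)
The plan is to localise near the limit point of the orbit and to produce an explicit holomorphic first integral on a sector whose level sets are the sought parabolic curves.

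First I would fix $q\in\Omega$ and set $p:=h(q)\in V_1\setminus\mathrm{Sing}(V)$, where $h$ denotes the rank‑one holomorphic limit of $F^n$ on $\Omega$ (Theorem \ref{smoothness}); thus the orbit of $q$ converges to $p$. I choose the local coordinates $(x,u)$ centred at $p$ given by Lemma \ref{lemma:local}, so that $V=\{x=0\}$ and $F$ has the form \eqref{germ2}. Since the orbit of $q$ converges to $p=(0,0)$, it eventually remains in the polydisc $\{|x|<\epsilon,\ |u|<\epsilon\}$; by Lemma \ref{lemma:separatrix} this forces $F^N(q)\in R_\epsilon(m)$ for some $N$ and some $m\in\{1,\dots,k-1\}$, and, the sectors $R_\epsilon(m)$ being forward invariant inside $U_\epsilon$ (this is implicit in the estimates proving Lemma \ref{lemma:attracting}), the whole forward orbit of $F^N(q)$ stays in $R_\epsilon(m)$.

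The core of the argument is to introduce on $R_\epsilon(m)$ the function
\[
  u_\infty(y):=\lim_{n\to\infty}\pi_2\bigl(F^n(y)\bigr),
\]
the limiting value of the $u$‑coordinate along the orbit, and to prove it is holomorphic. Writing $F^n=(X_n,U_n)$ and $u_\infty=U_0+\sum_{n\ge0}(U_{n+1}-U_n)$, the standard petal estimate $|X_n|\le C\bigl((k-1)n\bigr)^{-1/(k-1)}$, valid uniformly on $R_\epsilon(m)$, together with $|U_{n+1}-U_n|\le C'|X_n|^{k}$ read off from \eqref{germ2}, shows that the series converges uniformly on $R_\epsilon(m)$, and Cauchy estimates give the same for the differentiated series; hence $u_\infty\in\cO(R_\epsilon(m))$. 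Two properties are then immediate: $u_\infty\circ F=u_\infty$ (reindex the limit), and $u_\infty(0,u)=u$ since $(0,u)\in V$ is fixed. Differentiating the latter and using that $U_{n+1}-U_n=X_n^{k}\,O(X_n,U_n)$ vanishes to order $\ge k\ge2$ in $x$ along $\{x=0\}$ (because $X_n\equiv0$ there), one gets $\partial u_\infty/\partial u=1$ and $\partial u_\infty/\partial x=0$ on $\{x=0\}$; in particular $du_\infty\neq0$ near $V$, with kernel the characteristic direction $[1:0]$ along $V$.

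Consequently, near $(0,c)$ for $c$ small the level set $\Gamma_c:=\{u_\infty=c\}$ is a smooth complex curve, a graph $u=c+\psi(x)$ with $\psi(0)=\psi'(0)=0$, hence $\psi(x)=o(x)$; it is $F$‑invariant, every point of it has orbit converging to $(0,c)$, and it passes through $(0,c)\in V$ tangentially to $[1:0]$. Parametrising the portion of this graph lying over an attracting petal of the transverse map $x\mapsto x_1$ by the unit disc (a Riemann map sending $1$ to $0$), and then enlarging by repeatedly pulling back under the local inverse of $F$, I obtain a parabolic curve $\gamma$ for $F$ tangent to $[1:0]$ at $(0,c)$ — this is exactly the mechanism behind Theorem \ref{thm:Hakim} in the present situation. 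Taking $c:=u_\infty\bigl(F^N(q)\bigr)$, the orbit of $F^N(q)$ converges to $(0,c)$, so it eventually enters the graph piece over the petal, and pulling back we conclude $F^N(q)\in\gamma$. Finally $\gamma$ lies in the Fatou set (its orbit converges to $(0,c)\in V_1\setminus\mathrm{Sing}(V)$), is connected, and meets $\Omega$ at $F^N(q)$, whence $\gamma\subset\Omega$; by $F$‑invariance of $\gamma$ the orbit of $q$ is carried into $\gamma$ from time $N$ on.

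The step I expect to be the main obstacle is the uniform control of orbits near $V$, that is, proving that $u_\infty$ is genuinely holomorphic on $R_\epsilon(m)$: this needs the petal estimate on $|X_n|$ and the uniform convergence of the series for $u_\infty$ and its first derivatives \emph{up to} the fixed‑point set $\{x=0\}$, where the transverse parabolic dynamics degenerates. Once that uniformity is in hand, the remaining points — forward invariance of the $R_\epsilon(m)$, the graph description of $\Gamma_c$, and the backward enlargement of the petal piece to a bona fide parabolic curve — are routine refinements of the estimates already used for Lemmas \ref{lemma:attracting} and \ref{lemma:separatrix}.
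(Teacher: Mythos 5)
Your reduction to the sets $R_\epsilon(m)$ (via Lemmas \ref{lemma:attracting} and \ref{lemma:separatrix}) agrees with the paper, but from there you take a genuinely different route: you build an asymptotic transverse coordinate $u_\infty=\lim\pi_2\circ F^n$ on the sector and try to exhibit the parabolic curves as its level sets, whereas the paper simply quotes Hakim's proof of Theorem \ref{thm:Hakim}: the parabolic curve at $(0,u)$ is a graph $v_u(x)=u+x^kh_u(x)$ with $\|h_u\|<1$ over the sector, these graphs vary continuously with $u$, and hence by continuity they sweep out every point $(x,u)$ with $|x|$ small satisfying \eqref{eq:angle}. Unfortunately the step you yourself single out as the main obstacle is exactly where your argument breaks as written.

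The gap is the boundary behaviour of $u_\infty$ at $\{x=0\}$. Each term $U_{n+1}-U_n$ is indeed divisible by $x^k$, but the series cannot be differentiated term by term up to $\{x=0\}$: along the sector one only has $\sum_n|X_n|^k\asymp |x|$ (the first $\sim|x|^{-(k-1)}$ terms are each comparable to $|x|^k$), so the uniform bound you can extract is $|u_\infty(x,u)-u|=O(\epsilon|x|)$, not $o(|x|)$, and no conclusion about $\partial u_\infty/\partial x$ on $\{x=0\}$ follows. Worse, the claimed value $\partial u_\infty/\partial x=0$ at $(0,c)$ is false for $c\neq 0$: the normal form \eqref{germ2} is adapted only to the chosen base point, and at a nearby point $(0,c)$ the second component reads $u_1=u+x^k b(x,u)$ with $b(0,c)$ generically nonzero, so the nondegenerate characteristic direction there is $\bigl(1,\,b(0,c)/a(0,c)\bigr)$, tilted away from $(1,0)$ by an amount of order $|c|$. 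The level set $\Gamma_c$, being the stable set of $(0,c)$ inside the sector, is tangent to that tilted direction, so $\psi_c(x)=c+o(x)$ fails and $\partial u_\infty/\partial x(0,c)\neq0$ in general. Since the paper's definition of a parabolic curve requires tangency to $[v]$ at the attachment point, your identification of $\Gamma_c$ with a parabolic curve is not established. The idea is salvageable: get $du_\infty\neq0$ on the open sector from a Cauchy estimate in $u$ applied to the uniform bound $|u_\infty-u|=O(\epsilon|x|)$, and prove the tangency at $(0,c)$ after recentering Lemma \ref{lemma:local} at $(0,c)$ (or identify $\Gamma_c$ with Hakim's curve at $(0,c)$, which lies in the same stable set). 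But as stated the derivative computation on $\{x=0\}$, and with it the "parabolic curve" conclusion, does not hold; the paper's appeal to Hakim's uniform graph description plus continuity in $u$ avoids these issues entirely.
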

\begin{proof}
What remains to be shown is that any point whose orbit lands in a set $R_\epsilon(m)$ for $\epsilon$ sufficiently small is contained in parabolic curve. Hence it is sufficient to show that, in the local coordinates given by Equation \eqref{germ}, any point $(x,0)$ with $|x|$ sufficiently small and
\begin{equation}\label{eq:angle}
|\mathrm{Arg}(x) - \theta_u(m)| < \frac{\pi}{2k-2}
\end{equation}
lies on a parabolic curve. By Hakim's proof of Theorem \ref{thm:Hakim} in \cite{Hakim1998} we know that, for $\epsilon>0$ sufficiently small, the parabolic curve at $(0,u)$ is a graph over
\begin{equation*}
\{x \in \mathbb{C} \mid |x| < \epsilon, \; \;|\mathrm{Arg}(x) - \theta_u(m)| < \frac{\pi}{2k-2} \},
\end{equation*}
of the form $v_u(x) = u + x^k h_u(x)$, with $\|h_u\| < 1$. By Hakim's proof these graphs vary continuously with $h_u$, hence for $|x|$ small enough any point $(x,0)$ satisfying Equation \eqref{eq:angle} lies on one of the parabolic curves.
\end{proof}

If $k \ge 3$ then for each point in $\mathrm{Reg}(V)$ there exist $k-1$ parabolic curves. We investigate whether these curves lie in distinct Fatou components.

Note that the parabolic curves vary continuously with the basepoint in $\mathrm{Reg}(V)$. To each of these parabolic curves corresponds to a real tangent vector $\alpha$ for which
\begin{equation}
\mathrm{d}F \alpha = - \lambda \alpha,
\end{equation}
with $\lambda \in \mathbb{R}^+$. The vector $\alpha$ is unique up to multiplication in $\mathbb{R}^+$. Let $z_0, z_1 \in \mathrm{Reg}(V)$ with corresponding parabolic curves $C_0$ and $C_1$ and real attracting tangent vectors $\alpha_0$ and $\alpha_1$. We say that $(z_0, C_0) \sim (z_1, C_1)$ if there exists a continuous map $\phi$ from $[0,1]$ to the set of pairs $(z, \alpha)$, with $z \in \mathrm{Reg}(V)$ and $\alpha$ a real attracting tangent vector to $z$, so that $\phi(0) = (v_0, \alpha_0)$ and $\phi(1) = (v_1, \alpha_1)$. Clearly if $(z_0, C_0) \sim (z_1, C_1)$ then $C_0$ and $C_1$ lie in the same Fatou component. The converse also holds.

\begin{lemma}
The parabolic curves $C_0$ and $C_1$ lie in the same Fatou component if and only if $(z_0, C_0) \sim (z_1, C_1)$.
\end{lemma}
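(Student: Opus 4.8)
The forward implication is already noted in the text, so the task is the converse: assuming that $C_0$ and $C_1$ lie in a common Fatou component $\Omega$, I want to produce a continuous path of pairs $(z,\alpha)$ joining $(z_0,\alpha_0)$ to $(z_1,\alpha_1)$. The plan is to recast this as a connectedness statement about a single auxiliary space. First I would introduce the space $E$ of all pairs $(z,\alpha)$ with $z\in\mathrm{Reg}(V)$ and $\alpha$ a real attracting ray at $z$. Since the $k-1$ such rays depend real-analytically on the base point, $E$ is a $(k-1)$-sheeted covering manifold of $\mathrm{Reg}(V)$; in particular its path components coincide with its connected components, and by definition $(z_0,C_0)\sim(z_1,C_1)$ says precisely that the points $e_0:=(z_0,\alpha_0)$ and $e_1:=(z_1,\alpha_1)$ lie in the same component of $E$, which is what I would prove.

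The key device is a continuous ``address map'' $\Pi$ into $E$, defined on the open set $\mathcal R\subset\mathbb C^2$ obtained by taking the union of all the local sectors $R_\epsilon(m)$ over every base point in $\mathrm{Reg}(V)$ and every $m=1,\dots,k-1$ (allowing $\epsilon$ to shrink with the base point near $\mathrm{Sing}(V)$ and near infinity, so that a point of $\mathcal R$ carries an unambiguous base point and sector). By Lemma \ref{lemma:attracting}, every orbit meeting $\mathcal R$ converges to a point of $\mathrm{Reg}(V)$ tangentially to one of the attracting rays there, so I can set $\Pi(x)=\bigl(\lim_n F^n(x),\ \text{ray of convergence}\bigr)$. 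This $\Pi$ is continuous: the limit point is continuous because $F^n$ converges locally uniformly on $\Omega$ to a continuous limit map, and the ray is read off locally from the (locally constant) sector carrying the orbit tail. Crucially, $\Pi$ is constant along forward orbits inside $\mathcal R$: if both $x$ and $F^j(x)$ lie in $\mathcal R$, then $\Pi(x)=\Pi(F^j(x))$, since the two points share the same orbit tail. I would also record the behaviour of $\Pi$ on parabolic curves: for any parabolic curve $C$ at $z$ tangent to $[v]$ with real attracting ray $\alpha_C$, every $p\in C$ has $F^n(p)\to z$ and $[F^n(p)]\to[v]$, so eventually the orbit of $p$ enters $\mathcal R$ with $\Pi$-value $(z,\alpha_C)$; in particular $\Pi$ is eventually $\equiv e_0$ on the orbit of any point of $C_0$ and $\equiv e_1$ on the orbit of any point of $C_1$.

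With these facts in hand the conclusion follows by a standard connectedness and compactness chaining. I would join a point of $C_0$ to a point of $C_1$ by a path $\gamma\colon[0,1]\to\Omega$. By Lemmas \ref{lemma:attracting} and \ref{lemma:separatrix} every orbit in $\Omega$ eventually enters $\mathcal R$, so for each $t$ there is $n(t)$ with $F^{n(t)}(\gamma(t))\in\mathcal R$, and by openness of $\mathcal R$ an interval $I_t\ni t$ with $F^{n(t)}(\gamma(I_t))\subset\mathcal R$; since $\gamma(I_t)$ is connected, $\Pi(F^{n(t)}(\gamma(I_t)))$ lies in a single component $E_t$ of $E$. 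Finitely many $I_{t_1},\dots,I_{t_N}$, listed so that consecutive ones overlap, cover $[0,1]$. On an overlap $I_{t_i}\cap I_{t_{i+1}}$ the points $F^{n(t_i)}(\gamma(s))$ and $F^{n(t_{i+1})}(\gamma(s))$ lie on one orbit inside $\mathcal R$, so $\Pi$ agrees on them and hence $E_{t_i}=E_{t_{i+1}}$; thus all the $E_{t_i}$ coincide in one component $E_*$. Evaluating at the endpoints and using the parabolic-curve remark ($\Pi(F^{n(t)}(\gamma(0)))=e_0$ whenever $F^{n(t)}(\gamma(0))\in\mathcal R$, and similarly for $\gamma(1)$) puts $e_0$ and $e_1$ in $E_*$, i.e. $(z_0,C_0)\sim(z_1,C_1)$.

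The main obstacle is the careful construction of $\mathcal R$ and $\Pi$, not the chaining: one must choose the sector sizes $\epsilon$ over the various base points so that $\mathcal R$ genuinely fibers over $E$ — so that the base point and attracting direction of an orbit entering $\mathcal R$ are well defined and depend continuously on the point — while coping with the non-compactness of $\mathrm{Reg}(V)$ and the accumulation of the singular set. Once this is set up properly, continuity of $\Pi$, its invariance along orbits, and its value on the parabolic curves are all routine consequences of Lemmas \ref{lemma:attracting}, \ref{lemma:separatrix} and Hakim's description of the parabolic curves in Theorem \ref{thm:Hakim}.
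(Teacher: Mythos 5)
Your argument is correct, and its core mechanism is the same as the paper's: join a point of $C_0$ to a point of $C_1$ by a path $\gamma$ in $\Omega$, push it forward by the dynamics until it lies inside the attracting sectors, and then track the attracting direction continuously along the connected image. Where you differ is in how this pushing forward is uniformized. The paper exploits compactness twice: $h(\gamma)\subset\mathrm{Reg}(V)$ is compact, so one fixed $\epsilon>0$ works for Lemmas \ref{lemma:attracting} and \ref{lemma:separatrix} along all of $h(\gamma)$, and uniform convergence $\|F^n-h\|_\gamma\to0$ then gives a single $N$ with $F^N(\gamma)$ contained in the union of the sets $W(m)$; following the locally defined attracting direction along this one connected curve already yields the required path of pairs $(z,\alpha)$. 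You instead build the global sector set $\mathcal{R}$ over all of $\mathrm{Reg}(V)$, the $(k-1)$-sheeted covering space $E$ of attracting rays, and an orbit-invariant address map $\Pi$, and then chain finitely many intervals with varying entry times $n(t)$. This is a legitimate alternative: the covering-space formulation makes ``following the direction'' (and the subsequent monodromy discussion in the paper) completely precise, and orbit-invariance of $\Pi$ neatly replaces the need for a single iterate. What it costs is exactly the difficulty you flag yourself: constructing $\mathcal{R}$ with $\epsilon$ shrinking toward $\mathrm{Sing}(V)$ and toward infinity so that the limit point and ray are well defined and continuous. That global construction is avoidable, since only a neighborhood of the compact set $h(\gamma)$ is ever needed, and choosing a uniform $\epsilon$ there is precisely the paper's shortcut; with that simplification your argument collapses essentially to the paper's proof.
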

\begin{proof}
We only need to show that if $C_0$ and $C_1$ lie in the same Fatou component, then $(z_0, C_0) \sim (z_1, C_1)$. Assume therefore that $w_1 \in C_1$ and $w_2 \in C_2$, and suppose that $w_1$ and $w_2$ lie in the same Fatou component $\Omega$. Let $\gamma$ be a
continuous real curve in $\Omega$, starting at $w_1$ and ending at $w_2$. Since $\gamma$ is compact we know by Theorem \ref{thm:regular} that the sequence $(F^n)$ converges uniformly on $\gamma$ to a limit set $h(\gamma)$ contained in $\mathrm{Reg}(V)$.

Since $h(\gamma)$ is compact we can find a uniform $\epsilon >0$ for which the statements in Lemmas \ref{lemma:attracting} and \ref{lemma:separatrix} hold. Let $N \in \mathbb{N}$ be such that $\|F^n - h\|_\gamma < \epsilon$ for all $n \ge N$. The curve $F^N(\gamma)$ lies in the invariant Fatou component $\Omega$, and still starts at a point in $C_0$ and ends at a point in $C_1$. It follows from Lemma \ref{lemma:separatrix} that $F^N(\gamma)$ must lie in the union of the open sets $W(m)$. Hence we can follow the real attracting direction of the sets $W(m)$, starting with the direction corresponding to $C_0$ and ending with the direction corresponding to $C_1$.
\end{proof}

We give an simple family of polynomial endomorphisms of $\mathbb{C}^2$ for which we can easily determine whether the parabolic curves lie in distinct Fatou components. We let $p$ and $q$ be relatively prime, $k \ge 2$ be an integer, and define
\begin{equation*}
F(z,w) = (z,w) + (z^p-w^q)^k \cdot(z, -w).
\end{equation*}
As noted in the proof of Theorem \ref{thm:regular}  every point $(t^q, t^p) \in V = \{z^p - w^q = 0\}$ has a non-degenerate characteristic direction $(t^q, -t^p)$, and there are exactly $k-1$ parabolic curves tangent to this characteristic direction. Whether the curves $C_1, \ldots C_{k-1}$ all lie in distinct Fatou components or not depends on the values of $k, p$ and $q$.

\begin{proposition}
If $p\cdot q$ is divisible by $k-1$ then the curves $C_1, \ldots, C_{k-1}$ all lie in distinct Fatou components. If $p\cdot q$ and $k-1$ are relatively prime then $C_1, \ldots ,C_{k-1}$ all lie in the same Fatou component.
\end{proposition}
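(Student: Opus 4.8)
The plan is to apply the preceding lemma, which turns the statement into a question about the connected components of a covering space, and then to compute that covering's monodromy from the leading term of $F$. Since $p,q\geq 2$ are relatively prime, the only singular point of $V=\{z^p=w^q\}$ is the origin, so $t\mapsto(t^q,t^p)$ is a biholomorphism from $\mathbb{C}^{*}$ onto $\mathrm{Reg}(V)$; in particular $\pi_1(\mathrm{Reg}(V))\cong\mathbb{Z}$, generated by the image of the loop $t=e^{2\pi is}$, $s\in[0,1]$. Over $\mathrm{Reg}(V)$, the space $\mathcal{S}$ of pairs (regular point, real attracting tangent vector) is a $(k-1)$-sheeted covering, and by the preceding lemma two curves $C_i,C_j$ lie in the same Fatou component precisely when they lie in the same connected component of $\mathcal{S}$. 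Hence the number of Fatou components among $C_1,\dots,C_{k-1}$ is exactly the number of orbits of the monodromy permutation $\sigma\in S_{k-1}$ attached to this generating loop.

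To compute $\sigma$, fix $t\neq0$, put $z_0=(t^q,t^p)$, and recall that $v=(t^q,-t^p)$ is the non-degenerate characteristic direction at $z_0$. Restricting $F$ to the line $\{z_0+\tau v:\tau\in\mathbb{C}\}$ and expanding, using $g(z_0+\tau v)=(p+q)t^{pq}\,\tau+O(\tau^2)$ and $(P,Q)(z_0+\tau v)=v+O(\tau)$, one obtains
\[
\tau_1 = \tau + (p+q)^k\, t^{pqk}\,\tau^{k} + O(\tau^{k+1}).
\]
Consequently the $k-1$ real attracting directions at $z_0$ are the rays $[\tau]$ for which $[\tau]^{k-1}$ is a negative real multiple of $t^{-pqk}$. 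As $s$ runs from $0$ to $1$, the quantity $t^{-pqk}$ winds $-pqk$ times about the origin, so this configuration of $k-1$ equally spaced rays rotates continuously through the angle $-2\pi pqk/(k-1)$; since the rays are $2\pi/(k-1)$ apart and $pqk\equiv pq\pmod{k-1}$, the induced permutation of the fibre is the translation $\sigma:m\mapsto m-pq$ on $\mathbb{Z}/(k-1)$.

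Finally, the number of orbits of translation by $-pq$ on $\mathbb{Z}/(k-1)$ equals $\gcd(pq,k-1)$. If $k-1$ divides $pq$, then $\sigma=\mathrm{id}$ and there are $k-1$ orbits, so $C_1,\dots,C_{k-1}$ lie in $k-1$ distinct Fatou components; if $\gcd(pq,k-1)=1$, then $\sigma$ is a single $(k-1)$-cycle, there is one orbit, and all the $C_m$ lie in the same Fatou component. (More generally the same argument shows the curves are distributed among exactly $\gcd(pq,k-1)$ Fatou components.)

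I expect the main work to be in the middle step, in making the covering-space picture rigorous: one must check that the higher-order terms in \eqref{germ2} do not disturb the structure of $\mathcal{S}$ over $\mathrm{Reg}(V)$ — that the $k-1$ attracting directions stay pairwise distinct and vary continuously across all of $\mathrm{Reg}(V)$, so that the monodromy is genuinely governed by the leading coefficient $(p+q)^kt^{pqk}$ — and that the bijection between parabolic curves and real attracting tangent vectors furnished by Lemma \ref{lemma:attracting} together with Hakim's construction patches the local normalizations of Lemma \ref{lemma:local} into one global covering on which $\sigma$ acts as described.
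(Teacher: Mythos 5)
Your proposal is correct and follows essentially the same route as the paper: compute the $k-1$ real attracting directions at $(t^q,t^p)$ as solutions of $\alpha^{k-1}=-\mathrm{const}\cdot t^{-kpq}$, track their monodromy as $t$ winds once around the origin (a translation by $pq \bmod (k-1)$, since $kpq\equiv pq$), and invoke the preceding lemma identifying Fatou components of the parabolic curves with connected components of the space of pairs (base point, attracting direction). Your leading coefficient $(p+q)^k t^{kpq}$ is in fact the correct one (the paper's $(p-q)^k$ is a typo, immaterial since only the $t$-dependence matters), and your $\gcd(pq,k-1)$ count is a natural refinement of the two cases stated.
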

\begin{proof}
We note that the orbits on the parabolic curves converge to $(t^p, t^q)$ along real directions $\alpha \cdot (t^q - t^p)$ satisfying
\begin{equation*}
G_k(\alpha \cdot (t^q, - t^p)) = \alpha^k (p-q)^k t^{kpq} (t^p, - t^q) = - \alpha \cdot (t^q, - t^p),
\end{equation*}
which gives
\begin{equation*}
\alpha^{k-1} = \frac{-1}{(p-q)^k t^{kpq}}.
\end{equation*}

Hence we see that if $p\cdot q$ and $k-1$ are relatively prime, and therefore $kpq$ and $k-1$ are also relatively prime, then if $t$ moves in a full circle around the origin then the $k-1$ parabolic curves have been permuted in a full cycle. As the parabolic curves vary smoothly this implies that the $k-1$ parabolic curves lie in the same Fatou component.

Suppose on the other hand that $pq$ is divisible by $k-1$.  Then following the parabolic curves as $t$ makes one full circle around the origin leaves the parabolic curves invariant. However, loops around the origin generate the fundamental group of $\mathrm{Reg}(V)$, hence the monodromy group is trivial. It follows that two parabolic curves $C_1$ and $C_2$ touching $V$ at the same point $z$ lie in the same Fatou component if and only if $C_1 = C_2$.
\end{proof}

 \end{document}